\theoremstyle{plain}
\newtheorem{theorem}{Theorem}[section]
\newtheorem{lemma}[theorem]{Lemma}
\newtheorem{proposition}[theorem]{Proposition}
\newtheorem{corollary}[theorem]{Corollary}
\theoremstyle{definition}
\newtheorem{notation}[theorem]{Notation}
\newtheorem{example}[theorem]{Example}
\newtheorem{definition}[theorem]{Definition}
\theoremstyle{remark}
\newtheorem{remark}[theorem]{Remark}
\def\g{\gamma}
\def\G{\Gamma}
\def\vt{\vartheta}
\def\vn{\varnothing}
\newcommand{\sis}[2]{^{#1}\!\zeta_{#2}}
\begin{document}


\title[Rank Properties of $A^+(B_n)$]{The Ranks of the Additive Semigroup Reduct of Affine Near-Semiring over Brandt Semigroup}
\author[Jitender Kumar, K. V. Krishna]{Jitender Kumar and  K. V. Krishna}
\address{Department of Mathematics, Indian Institute of Technology Guwahati, Guwahati, India}
\email{\{jitender, kvk\}@iitg.ac.in}


\begin{abstract}
This work investigates the rank properties of $A^+(B_n)$, the additive semigroup reduct of affine near-semiring over Brandt semigroup $B_n$. In this connection, this work reports the ranks $r_1$, $r_2$, $r_3$ and $r_5$ of $A^+(B_n)$ and identifies a lower bound for the upper rank $r_4(A^+(B_n))$. While this lower bound is found to be the $r_4(A^+(B_n))$ for $n \ge 6$, in other cases where $2 \le n \le 5$, the upper rank of $A^+(B_n)$ is still open for investigation.
\end{abstract}

\subjclass[]{20M10}

\keywords{Rank properties, Semigroup, Near-semiring, Brandt semigroup, Affine maps}

\maketitle


\section*{Introduction}

Since the work of Marczewski in \cite{a.mar66}, many authors have studied the rank properties in the context of general algebras. The concept of rank for general algebras is equivalent to the concept of dimension in linear algebra. The dimension of a vector space is the maximum cardinality of an independent subset, or equivalently, it is the minimum cardinality of a generating set of the vector space.
A subset $U$ of a semigroup $\G$ is said to be \emph{independent} if every element of $U$ is not in the subsemigroup generated by the remaining elements of $U$, i.e. \[ \forall a \in U, \; a \notin \langle U \setminus \{a\} \rangle .\] This definition of independence is equivalent to the usual definition of independence in linear algebra. It can be observed that the minimum size of a generating set need not be equal to the maximum size of an  independent set in a semigroup. Accordingly, Howie and Ribeiro have considered the following possible definitions of ranks for a semigroup $\G$ (cf. \cite{a.hw99,a.hw00}).
\begin{enumerate}
\item $r_1(\G) = \max\{k : \forall U \subseteq \G$ with $|U| = k, U$ is independent\}.
\item $r_2(\G) = \min\{|U| : U \subseteq \G, \langle U\rangle = \G\}$.
\item $r_3(\G) = \max\{|U| : U \subseteq \G, \langle U\rangle = \G, U$ is independent\}.
\item $r_4(\G) = \max\{|U| : U \subseteq \G, U$  is independent\}.
\item $r_5(\G) = \min\{k : \forall U \subseteq \G$ with $|U| = k, \langle U\rangle = \G\}$.
\end{enumerate}
For a finite semigroup $\G$, it can be observed that \[r_1(\G) \le r_2(\G) \le r_3(\G) \le r_4(\G) \le r_5(\G).\] Thus,
$r_1(\G), r_2(\G), r_3(\G), r_4(\G)$ and $r_5(\G)$ are, respectively, known as \emph{small rank}, \emph{lower rank}, \emph{intermediate rank}, \emph{upper rank} and \emph{large rank} of $\G$.

While all these five ranks coincide for certain semigroups, there exist semigroups for which all these ranks are distinct. For instance, all these five ranks are equal to $|\G|$ for a finite left or right zero semigroup $\G$.  For $n > 2$,
Howie et al. have determined all these five ranks for Brandt semigroup $B_n$ (see Definition \ref{d.bs}) through the papers \cite{a.howie87,a.hw99,a.hw00} and observed that all these five ranks are different from each other.

The ranks of rectangular bands and monogenic semigroups were also established in \cite{a.hw99,a.hw00}. The lower rank of completely 0-simple semigroups was obtained by Ru\v{s}kuc \cite{a.ruskuc94}. The intermediate rank of $S_n$, the symmetric group of degree $n$, is determined to be $n-1$ by Whiston \cite{a.whiston00}. All independent generating sets of size $n-1$ in $S_n$ were investigated in \cite{a.cam02}. In \cite{jdm02}, Mitchell studied the rank properties of various groups, semigroups and semilattices. The rank properties of certain semigroups of order preserving transformations have been investigated in \cite{a.howie92} and further extended to orientation-preserving transformations in \cite{a.ping11}.

In this work, we investigate all the five ranks of the additive semigroup reduct of $A^+(B_n)$ -- the affine near-semiring over Brandt semigroup $B_n$. This semigroup is indeed the semigroup generated by affine maps over $B_n$.  The remaining paper has been organized into six sections. Section 1 provides a necessary background material for the subsequent four main sections which are devoted for all the five ranks of $A^+(B_n)$. We conclude the paper in Section 6.

\section{Preliminaries}

In this section, we provide a necessary background material and fix our
notation. For more details one may refer to \cite{a.jk13}.

\begin{definition}
An algebraic structure $(S, +, \cdot)$ is said to be a
\emph{near-semiring} if
\begin{enumerate}
\item $(S, +)$ is a semigroup,
\item $(S, \cdot)$ is a semigroup, and
\item $a(b + c) = ab + ac$, for all $a,b,c \in S$.
\end{enumerate}
\end{definition}

In this work, unless it is required, algebraic structures (such as
semigroups, groups, near-semirings) will simply be referred by their
underlying sets without explicit mention of their operations. Further, we
write an argument of a function on its left, e.g. $xf$ is the value of a
function $f$ at an argument $x$.

\begin{example}
Let $(\Gamma, +)$ be a semigroup and $M(\Gamma)$ be the set of all
mappings on $\Gamma$. The algebraic structure $(M(\G), +, \circ)$ is a
near-semiring, where $+$ is point-wise addition and $\circ$ is composition
of mappings, i.e., for $\gamma \in \Gamma$ and $f,g \in M(\Gamma)$,
$$\g(f + g)= \g f + \g g \;\;\;\; \text{and}\;\;\;\; \g(f \circ g) = (\g
f)g.$$ Also, certain subsets of $M(\G)$ are near-semirings. For instance,
the set $M_c(\Gamma)$ of all constant mappings on $\Gamma$ is a
near-semiring with respect to the above operations so that $M_c(\Gamma)$
is a subnear-semiring of $M(\G)$.
\end{example}

Now, we recall the notion of affine near-semirings from
\cite{kvk05a}. Let $(\G, +)$ be a semigroup. An element $f \in M(\G)$ is
said to be an \emph{affine map} if $f = g + h$, for some $g \in
End(\G)$, the set of all endomorphisms over $\G$, and $h \in M_c(\G)$. This sum is said to be an \emph{affine decomposition} of $f$.
The set of all affine mappings over $\G$, denoted by $\text{Aff}(\G)$, need not be a
subnear-semiring of $M(\G)$. The \emph{affine near-semiring}, denoted by
$A^+(\G)$, is the subnear-semiring generated by $\text{Aff}(\G)$ in
$M(\G)$. Indeed, the subsemigroup of $(M(\G), +)$ generated by
$\text{Aff}(\G)$ equals $(A^+(\G), +)$ (cf. \cite[Corollary 1]{kvk05b}).
If $(\G, +)$ is commutative, then $\text{Aff}(\G)$ is a subnear-semiring
of $M(\G)$ so that $\text{Aff}(\G) = A^+(\G)$.

\begin{definition}\label{d.bs}
For any integer $n \geq 1$, let $[n] = \{1,2,\ldots,n\}$. The semigroup
$(B_n, +)$, where $B_n = ([n]\times[n])\cup \{\vartheta\}$ and the
operation $+$ is given by
\[ (i,j) + (k,l) =
                \left\{\begin{array}{cl}
                (i,l) & \text {if $j = k$;}  \\
                \vartheta     & \text {if $j \neq k $}
                  \end{array}\right.  \]
and, for all $\alpha \in B_n$, $\alpha + \vartheta = \vartheta + \alpha =
\vartheta$,
is known as \emph{Brandt semigroup}. Note that $\vartheta$ is the (two
sided) zero element in $B_n$.
\end{definition}

In \cite{a.jk13}, Jitender and Krishna have studied the structure of (both additive and multiplicative) semigroup reducts of
the near-semiring $A^+(B_n)$ via Green's relations.  We now recall the results on $A^+(B_n)$ which are useful in the present work. The following concept plays a vital role in the study of $A^+(B_n)$.

Let $(\Gamma, +)$ be a semigroup with zero element $\vartheta$. For  $f
\in M(\Gamma)$, the \emph{support of $f$}, denoted by supp$(f)$, is
defined by the set
\[ {\rm supp}(f) = \{\alpha \in \Gamma \;|\; \alpha f \neq \vartheta\}.\]
A function $f \in M(\Gamma)$ is said to be of \emph{k-support} if the
cardinality of supp$(f)$ is $k$, i.e. $|{\rm supp}(f)| = k$. If $k =
|\Gamma|$ (or $k = 1$), then $f$ is said to be of \emph{full support} (or
\emph{singleton support}, respectively). For $X \subseteq M(\Gamma)$, we
write $X_k$ to denote the set of all mappings of $k$-support in $X$, i.e.
$$ X_k = \{ f \in X \mid f \; \text{is of $k$-support}\;  \}.$$

\begin{remark}[\cite{a.jk13}]\label{r.ksupport-sum}
For  $f \in M(\G)_k$ and $g \in M(\G)$, we have
$|{\rm supp}(f+g)| \leq k$ and $|{\rm supp}(g + f)| \leq k$.
\end{remark}

For ease of reference,  we continue to use the following notations for the elements of $M(B_n)$, as given in \cite{a.jk13}.

\begin{notation}\

\begin{enumerate}
\item For $c \in B_n$, the constant map that sends all the elements of $B_n$ to $c$ is denoted by $\xi_c$. The set of all constant maps over $B_n$ is denoted by $\mathcal{C}_{B_n}$.
\item For $k, l, p, q \in [n]$, the singleton support map that send $(k, l)$ to $(p, q)$ is denoted by $\sis{(k, l)}{(p, q)}$.
\item For $p, q \in [n]$, the $n$-support map which sends $(i, p)$ (where $1 \le i \le n$) to $(i\sigma, q)$ using a permutation $\sigma \in S_n$ is denoted by $(p, q; \sigma)$.
\end{enumerate}
\end{notation}

Note that $A^+(B_1) = \{(1, 1; id)\} \cup \mathcal{C}_{B_n}$, where $id$ is the
identity permutation on $[n]$. For $n \ge 2$, the elements of $A^+(B_n)$ are given by the following theorem.

\begin{theorem}[\cite{a.jk13}]\label{t.class.a+bn}
For $n \geq 2$, $A^+(B_n)$ precisely contains $(n! + 1)n^2 + n^4 + 1$ elements with the following breakup.
\begin{enumerate}
\item All the $n^2 + 1$ constant maps.
\item All the $n^4$ singleton support maps.
\item The remaining $(n!)n^2$ elements are the $n$-support maps of the form $(p, q; \sigma)$, where $p, q \in [n]$ and $\sigma \in S_n$.
\end{enumerate}
\end{theorem}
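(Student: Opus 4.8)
The plan is to use the identification, recalled in Section~1, of $(A^+(B_n),+)$ with the subsemigroup of $(M(B_n),+)$ generated by $\mathrm{Aff}(B_n)$. The first task is therefore to describe $\mathrm{Aff}(B_n)$ explicitly, which forces us to determine $\mathrm{End}(B_n)$ first. I would prove that $\mathrm{End}(B_n)$ consists of exactly the $n!$ automorphisms $\phi_\sigma\colon(i,j)\mapsto(i\sigma,j\sigma)$ (for $\sigma\in S_n$) together with the $n+1$ constant maps $\xi_c$ onto the idempotents $c\in\{(1,1),\dots,(n,n),\vartheta\}$. The mechanism is that for any endomorphism $g$ the element $\vartheta g$ is idempotent, and $\alpha+\vartheta=\vartheta$ forces $\alpha g+\vartheta g=\vartheta g$ for all $\alpha$; if $\vartheta g\neq\vartheta$ this pins $g$ down to a constant map onto a diagonal idempotent, while if $\vartheta g=\vartheta$, applying $g$ to the identities $(i,j)+(j,i)=(i,i)$ and $(j,i)+(i,j)=(j,j)$ shows that either $g=\xi_\vartheta$ or $g$ maps the diagonal bijectively onto itself and equals the automorphism induced by the resulting permutation.

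Given $\mathrm{End}(B_n)$, a direct evaluation of $g+h$ for $g\in\mathrm{End}(B_n)$ and $h=\xi_d\in\mathcal{C}_{B_n}$ yields three families. If $g=\xi_c$ then $g+h=\xi_{c+d}$, and as $c$ ranges over idempotents and $d$ over $B_n$ the sum $c+d$ ranges over all of $B_n$, so all $n^2+1$ constant maps appear; if $g=\phi_\sigma$ and $d=\vartheta$ then $g+h=\xi_\vartheta$; and if $g=\phi_\sigma$ and $d=(s,t)\neq\vartheta$ then $g+h$ sends $(i,s\sigma^{-1})\mapsto(i\sigma,t)$ and annihilates everything else, i.e. $g+h=(s\sigma^{-1},t;\sigma)$, so that as $s,t,\sigma$ vary one obtains precisely the $(n!)n^2$ maps $(p,q;\sigma)$. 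Hence $\mathrm{Aff}(B_n)=\mathcal{C}_{B_n}\cup\{(p,q;\sigma):p,q\in[n],\ \sigma\in S_n\}$, which is already contained in the set $\Sigma$ listed in the theorem.

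Next I would show that $\Sigma$ is closed under $+$, whence $A^+(B_n)=\langle\mathrm{Aff}(B_n)\rangle\subseteq\Sigma$. This is a finite check over the ordered pairs of the three types (constant, $n$-support, singleton), with $\xi_\vartheta$ absorbing the degenerate outcomes; by Remark~\ref{r.ksupport-sum} any such sum has support no larger than the smaller of the two summands' supports, so the only genuinely delicate case, the sum of two $n$-support maps, has to be done by hand, and there $(i\sigma,q)+(i\tau,q')$ is nonzero for just one value of $i$, so the result collapses to a singleton map or $\xi_\vartheta$. For the reverse inclusion $\Sigma\subseteq A^+(B_n)$: the constant maps and the maps $(p,q;\sigma)$ already lie in $\mathrm{Aff}(B_n)$, and every singleton map is a sum of two affine maps, for instance $\sis{(k,l)}{(u,v)}=\xi_{(u,k)}+(l,v;id)$; combining the two inclusions and counting $(n^2+1)+n^4+(n!)n^2=(n!+1)n^2+n^4+1$ completes the argument.

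The step I expect to be the real obstacle is the classification of $\mathrm{End}(B_n)$: once the affine maps are known, everything else is routine bookkeeping, but ruling out exotic (non-constant, non-bijective) endomorphisms requires a careful case analysis of how $\vartheta$ and the diagonal idempotents $(i,i)$ interact under an endomorphism. A secondary, purely organizational difficulty is making the closure verification for $\Sigma$ genuinely exhaustive over all pairs of types rather than only the generic ones.
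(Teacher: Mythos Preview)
Your proposal is correct. Note, however, that the paper does not actually prove this theorem: it is quoted from the companion paper \cite{a.jk13} as a preliminary result, so there is no ``paper's own proof'' to compare against here. What the present paper does supply is Remark~\ref{r.ad.a+bn}, which carries out exactly your inclusion $\Sigma\subseteq A^+(B_n)$ (constant maps are affine, the $(p,q;\sigma)$ are affine via $\phi_\sigma+\xi_{(p\sigma,q)}$, and each singleton map is a constant plus an $n$-support map), together with Proposition~\ref{sn-iso-autbn} identifying $\mathrm{Aut}(B_n)$ with $S_n$. Your argument goes further in two places the paper leaves to the reference: the determination of $\mathrm{End}(B_n)$ (automorphisms plus the $n+1$ idempotent constants), and the closure of $\Sigma$ under $+$.

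Both of those added pieces are sound. Your endomorphism classification is the standard one: if $\vartheta g\ne\vartheta$ then the absorbing identity forces $g$ constant onto a diagonal idempotent; if $\vartheta g=\vartheta$ then either some $(i,i)g=\vartheta$, which propagates via $(i,j)=(i,1)+(1,j)$ to $g=\xi_\vartheta$, or the diagonal maps bijectively to itself and $g=\phi_\sigma$. For closure, your case split is complete; the one point worth making explicit when you write it up is that any sum in $\Sigma$ sends $\vartheta$ to $\vartheta$ (because in every product at least one factor already does), so a sum of support~$1$ really is one of the listed singleton maps $\sis{(k,l)}{(p,q)}$ and not a map supported at $\vartheta$.
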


As shown in the Remark \ref{r.ad.a+bn}, except singleton support maps, all other elements of $A^+(B_n)$ are indeed affine maps over $B_n$. We require the following proposition.

\begin{proposition}[\cite{a.jk13}]\label{sn-iso-autbn}
The assignment $\sigma \mapsto \phi_\sigma: S_n \rightarrow Aut(B_n)$ is an isomorphism, where the mapping $\phi_\sigma: B_n \rightarrow B_n$ is given by, $\forall i, j \in [n]$, \[(i, j)\phi_\sigma = (i\sigma, j\sigma)\; \mbox{ and }\; \vt\phi_\sigma = \vt.\]
\end{proposition}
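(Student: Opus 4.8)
The plan is to establish the four standard ingredients: each $\phi_\sigma$ is a well-defined automorphism of the semigroup $(B_n,+)$, the assignment $\sigma\mapsto\phi_\sigma$ is a group homomorphism, it is injective, and it is surjective onto $Aut(B_n)$.

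First I would check that $\phi_\sigma\in Aut(B_n)$. Bijectivity is immediate, since $\phi_{\sigma^{-1}}$ is a two-sided inverse of $\phi_\sigma$. To see that $\phi_\sigma$ respects $+$, one verifies $\bigl((i,j)+(k,l)\bigr)\phi_\sigma = (i,j)\phi_\sigma + (k,l)\phi_\sigma$ by splitting into the cases $j=k$ and $j\neq k$: in the first case both sides equal $(i\sigma,l\sigma)$, and in the second both sides equal $\vt$, using that $j\sigma\neq k\sigma$ whenever $j\neq k$. The cases involving $\vt$ are trivial since $\vt\phi_\sigma=\vt$. The homomorphism property then follows from a direct computation with maps acting on the right: for all $i,j\in[n]$ we get $(i,j)(\phi_\sigma\circ\phi_\tau) = \bigl((i\sigma,j\sigma)\bigr)\phi_\tau = (i\sigma\tau,j\sigma\tau) = (i,j)\phi_{\sigma\tau}$, while $\vt$ is fixed throughout, so $\phi_\sigma\circ\phi_\tau=\phi_{\sigma\tau}$. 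Injectivity is equally easy: if $\phi_\sigma=\phi_\tau$, then comparing images of $(i,i)$ forces $i\sigma=i\tau$ for every $i$, hence $\sigma=\tau$.

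The substantive step, which I expect to be the main obstacle, is surjectivity. Given $\psi\in Aut(B_n)$, first note that $\vt$ is the unique two-sided zero of $B_n$, so $\vt\psi=\vt$. The key structural observation is that the non-zero idempotents of $(B_n,+)$ are exactly the diagonal elements $(i,i)$, because $(i,i)+(i,i)=(i,i)$ whereas $(i,j)+(i,j)=\vt$ for $i\neq j$. Since $\psi$ preserves idempotency and bijectively fixes $\vt$, it permutes $\{(i,i)\mid i\in[n]\}$, so there is a unique $\sigma\in S_n$ with $(i,i)\psi=(i\sigma,i\sigma)$ for all $i$. It remains to show $\psi=\phi_\sigma$, i.e.\ $(i,j)\psi=(i\sigma,j\sigma)$ for all $i,j$. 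For this I would exploit the identity $(i,i)+(i,j)+(j,j)=(i,j)$; applying $\psi$ gives $(i\sigma,i\sigma)+(i,j)\psi+(j\sigma,j\sigma)=(i,j)\psi$. Since $\psi$ is a bijection and $(i,j)\neq\vt$, we have $(i,j)\psi\neq\vt$, so writing $(i,j)\psi=(a,b)$ the displayed equation forces $a=i\sigma$ and $b=j\sigma$. Hence $(i,j)\psi=(i\sigma,j\sigma)=(i,j)\phi_\sigma$, so $\psi=\phi_\sigma$ and the assignment is onto, completing the proof that $\sigma\mapsto\phi_\sigma$ is an isomorphism.
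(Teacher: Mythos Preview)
Your argument is correct and complete: each of the four ingredients (each $\phi_\sigma$ is an automorphism, the assignment is a homomorphism, injectivity, surjectivity via the idempotent characterisation of the diagonal and the sandwich identity $(i,i)+(i,j)+(j,j)=(i,j)$) is verified cleanly. Note, however, that the paper does not supply its own proof of this proposition; it is quoted from \cite{a.jk13} as background material, so there is no in-paper proof to compare against. Your self-contained verification is the standard one and would be entirely suitable.
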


\begin{remark}[\cite{a.jk13}]\label{r.ad.a+bn}
For $k, l, p, q \in [n]$ and $\sigma \in S_n$, we have the following.
\begin{enumerate}
\item Since $\xi_{(p, p)} + \xi_{(p, q)}$ and $\xi_\vt + \xi_{(1,1)}$, respectively,  are affine decompositions of $\xi_{(p, q)}$ and $\xi_{\vt}$, all constant maps are affine maps.
\item The $n$-support map $(p, q; \sigma)$ is an affine map. An affine decomposition for $(p, q; \sigma)$ is $\phi_\sigma + \xi_{(p\sigma, q)}$, where $\phi_\sigma$ is as per Proposition \ref{sn-iso-autbn}. Furthermore, for $f \in Aut(B_n)$, $f + \xi_{(r, s)}$ is an $n$-support map represented by $(r\rho^{-1}, s; \rho)$, where $f = \phi_\rho$.
\item Every singleton support map can be written as sum of a constant map and an $n$-support affine map so that it is an element of $A^+(B_n)$. For instance, $\sis{(k, l)}{(p, q)}$ $= \xi_{(p, q)} + g$, where $g = (l, q; \rho)$ such that $k\rho = q$.
\end{enumerate}
\end{remark}

In what follows, $A^+(B_n)$ denotes the additive  semigroup reduct $(A^+(B_n), +)$ of the affine near-semiring $(A^+(B_n), +, \circ)$. We now present a necessary result on the Green's relations $\mathcal{R}$ and $\mathcal{L}$ of the additive semigroup $A^+(B_n)$.

\begin{theorem}[\cite{a.jk13}]\label{t.gr-rl}
For $1 \le i \le 2$, let
$\pi_i : [n] \times [n] \rightarrow [n]$ be the $i$th projection map. That
is, $(p, q)\pi_1 = p$ and $(p, q)\pi_2 = q$, for all $(p, q) \in [n]
\times [n]$.
\begin{enumerate}
\item For $f,g \in A^+(B_n) \setminus \{\xi_\vt\}$,  $f \mathcal{R} g$ if
and only if ${\rm supp}(f) = {\rm supp}(g)$ and $\alpha f\pi_1 = \alpha
g\pi_1$, for all $\alpha \in {\rm supp}(f)$.

\item For $f,g \in \mathcal{C}_{B_n} \setminus \{\xi_\vt\}$,
$f \mathcal{L} g$ if and only if $\alpha f\pi_2 = \alpha
g\pi_2$, for all $\alpha \in B_n$.

\item The number of $\mathcal{R}$-classes containing $n$-support elements in $A^+(B_n)$ is $(n!)n$.
\end{enumerate}
\end{theorem}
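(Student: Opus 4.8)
The plan is to work directly from the definitions of the Green's relations on the additive semigroup $S = A^+(B_n)$: namely $f \mathcal{R} g$ iff $f + S^1 = g + S^1$, and $f \mathcal{L} g$ iff $S^1 + f = S^1 + g$. Before addressing the three parts, I would record two elementary facts about the operation of $B_n$. First, from the rule for $+$ in $B_n$ one checks that whenever $\alpha(f + h) \neq \vt$ we also have $\alpha f \neq \vt$ and $\alpha(f+h)\pi_1 = \alpha f\pi_1$; together with Remark \ref{r.ksupport-sum}, this says that right-adding a map can only shrink the support and never changes the first coordinate of a surviving value. Second, inspecting the three families in Theorem \ref{t.class.a+bn} shows that every $f \in A^+(B_n) \setminus \{\xi_\vt\}$ has all of its non-$\vt$ values sharing a common second coordinate (vacuously for singleton-support maps, by definition for constant maps, and equal to $q$ for an $n$-support map $(p, q; \sigma)$).

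For part (1), the forward implication should be immediate from the first fact: if $f \mathcal{R} g$ with $f \neq g$, write $g = f + h$ and $f = g + h'$ with $h, h' \in S$ to obtain ${\rm supp}(f) = {\rm supp}(g)$ and agreement of the first-coordinate maps on this common support. For the converse I would use the second fact: assuming ${\rm supp}(f) = {\rm supp}(g)$ and $\alpha f\pi_1 = \alpha g\pi_1$ on it, let $d$ and $e$ be the common second coordinates of the non-$\vt$ values of $f$ and of $g$; a pointwise verification then gives $f + \xi_{(d, e)} = g$ and $g + \xi_{(e, d)} = f$, and since constant maps belong to $A^+(B_n)$ by Theorem \ref{t.class.a+bn}, this yields $f \mathcal{R} g$.

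Part (2) I would treat as the left-sided analogue restricted to constant maps. For the forward direction, if $\xi_{(a,b)} \mathcal{L} \xi_{(c,d)}$ with the maps distinct, write $\xi_{(c,d)} = h + \xi_{(a,b)}$ for some $h \in A^+(B_n)$; evaluating at $\vt$ and using that every non-constant element of $A^+(B_n)$ fixes $\vt$ forces $h$ to be a constant map, and then $\xi_{(x,y)} + \xi_{(a,b)} = \xi_{(c,d)}$ forces $b = d$, i.e. $\alpha f\pi_2 = \alpha g\pi_2$ for all $\alpha$. Conversely, if $b = d$ then $\xi_{(c,a)} + \xi_{(a,b)} = \xi_{(c,b)}$ and $\xi_{(a,c)} + \xi_{(c,b)} = \xi_{(a,b)}$ exhibit $\xi_{(a,b)} \mathcal{L} \xi_{(c,b)}$, again using that constant maps lie in $A^+(B_n)$.

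Part (3) would then be a counting argument built on part (1). An $n$-support element is of the form $(p, q; \sigma)$; its support $[n] \times \{p\}$ recovers $p$, and the first-coordinate map $(i, p) \mapsto i\sigma$ on that support recovers $\sigma \in S_n$, so by part (1) two $n$-support elements are $\mathcal{R}$-related precisely when their parameters $p$ and $\sigma$ coincide. Since $\mathcal{R}$-related elements have equal support, a class meeting the $n$-support elements consists only of them, so these classes are indexed bijectively by the pairs $(p, \sigma) \in [n] \times S_n$, giving $(n!)n$ of them. The step I expect to be the main obstacle is the converse half of part (1): one must produce a genuine element of $A^+(B_n)$ witnessing the relation, and the observation that a suitable constant map always suffices — which rests on the second fact above, and hence on the classification in Theorem \ref{t.class.a+bn} — is the crux of the argument.
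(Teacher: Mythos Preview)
Your argument is sound in all three parts, and in particular the key observation for the converse of (1) --- that every nonzero element of $A^+(B_n)$ has a single common second coordinate among its non-$\vartheta$ values, so that a constant map $\xi_{(d,e)}$ suffices to witness the $\mathcal{R}$-relation --- is correct and is exactly the right idea. The argument for (2) via evaluation at $\vartheta$ is also valid, since every element of $A^+(B_n)$ other than a nonzero constant map sends $\vartheta$ to $\vartheta$.

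However, there is nothing to compare against: in this paper Theorem~\ref{t.gr-rl} is not proved but merely quoted, with attribution to \cite{a.jk13}. The present paper uses it as a black box, so there is no ``paper's own proof'' here. Your write-up would serve as a self-contained proof of the cited result, but the comparison you were asked to make is vacuous in this instance.
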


\section{The ranks $r_1$ and $r_2$ of $A^+(B_n)$}

It can be easily observed that $A^+(B_1)$ is an independent set and none of its proper subsets generates $A^+(B_1)$. Hence, for $1 \le i \le 5$, we have \[r_i(A^+(B_1))  = |A^+(B_1)| = 3.\] In the rest of the paper we shall investigate the ranks of $A^+(B_n)$, for $n > 1$.

In this section, after quickly ascertaining the small rank $r_1$ of $A^+(B_n)$, we will obtain its lower rank $r_2$. The small rank of $A^+(B_n)$ comes as a consequence of the following result due to Howie and Ribeiro.

\begin{theorem}[\cite{a.hw00}]\label{r1-gm}
Let $\G$ be a finite semigroup, with $|\G| \ge 2$. If $\G$ is not a band, then $r_1(\G) = 1$.
\end{theorem}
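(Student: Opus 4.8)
The plan is to pin down $r_1(\G)$ by bounding it on both sides. For the lower bound $r_1(\G)\ge 1$, I would note that every one-element subset $\{a\}\subseteq\G$ is independent: the subsemigroup generated by the empty set is $\langle\emptyset\rangle=\emptyset$, so the requirement $a\notin\langle\{a\}\setminus\{a\}\rangle$ holds vacuously. It also helps to observe at the outset that any subset of an independent set is again independent, so the set of $k$ for which \emph{every} $k$-element subset of $\G$ is independent is downward closed; hence it suffices to locate the smallest $k$ that fails.

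The real content is the upper bound $r_1(\G)\le 1$, and for this I would simply exhibit one two-element subset of $\G$ that is not independent. Since $\G$ is not a band, some element $a\in\G$ satisfies $a^2\ne a$. Then $a\ne a^2$, both lie in $\G$, and $a^2\in\langle a\rangle$; therefore the set $\{a,a^2\}$ is not independent, because its element $a^2$ belongs to the subsemigroup generated by the remaining element. Consequently it is false that every two-element subset of $\G$ is independent, so $r_1(\G)<2$. Combined with $r_1(\G)\ge 1$, this yields $r_1(\G)=1$.

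There is no serious obstacle here; the only care needed is with the bookkeeping conventions --- that $\langle\emptyset\rangle=\emptyset$, so singletons are automatically independent, and that a single dependent pair already forces $r_1<2$ because $r_1$ demands that \emph{all} $2$-subsets be independent. It is worth remarking that finiteness of $\G$ is not actually used in this argument: what it exploits is precisely the hypothesis that $\G$ contains a non-idempotent element, i.e.\ that $\G$ is not a band.
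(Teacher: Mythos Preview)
Your argument is correct and is the standard one: exhibit a non-idempotent $a$ and observe that $\{a,a^2\}$ is a dependent two-element set, forcing $r_1(\G)<2$, while any singleton is trivially independent. Note, however, that the paper does not supply its own proof of this statement; it is quoted from Howie and Ribeiro \cite{a.hw00} and used as a black box, so there is nothing in the paper to compare your proof against.
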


Owing to the fact that $A^+(B_n)$ (for $n \ge 2$) have some non idempotent elements, it is not a band. For instance, the constant maps $\xi_{(p, q)}$ with $p \ne q$ in $A^+(B_n)$  are not idempotent. Hence, we have the following corollary of Theorem \ref{r1-gm}.

\begin{corollary}
For $n \ge 2$, $r_1(A^+(B_n)) = 1$.
\end{corollary}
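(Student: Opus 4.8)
The statement to prove is the corollary $r_1(A^+(B_n)) = 1$ for $n \ge 2$. This is essentially immediate from Theorem~\ref{r1-gm} once we verify the hypotheses, so the plan is short and the "main obstacle" is really just being careful about the trivial cases.

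\medskip

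The plan is to invoke Theorem~\ref{r1-gm} directly. First I would note that for $n \ge 2$ the semigroup $A^+(B_n)$ has at least two elements --- indeed by Theorem~\ref{t.class.a+bn} it has $(n!+1)n^2 + n^4 + 1$ elements, which is well over $2$ --- so the cardinality hypothesis $|\G| \ge 2$ is satisfied. Second, I would exhibit a non-idempotent element to show $A^+(B_n)$ is not a band. The cleanest witness is a constant map $\xi_{(p,q)}$ with $p \ne q$, which exists precisely because $n \ge 2$: computing $\xi_{(p,q)} + \xi_{(p,q)}$, every element of $B_n$ is sent by the first summand to $(p,q)$ and then, since $(p,q)+(p,q) = \vartheta$ when $p \ne q$, the sum $\xi_{(p,q)} + \xi_{(p,q)}$ equals $\xi_\vartheta \ne \xi_{(p,q)}$. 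Hence $\xi_{(p,q)}$ is not idempotent and $A^+(B_n)$ is not a band.

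\medskip

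With both hypotheses of Theorem~\ref{r1-gm} verified, we conclude $r_1(A^+(B_n)) = 1$. I do not anticipate any genuine obstacle here; the only point requiring a moment's care is making sure the non-idempotent element actually lies in $A^+(B_n)$ and that its existence genuinely needs $n \ge 2$ --- both are handled by Remark~\ref{r.ad.a+bn}(1), which confirms every constant map is an affine map and hence belongs to $A^+(B_n)$, and by the observation that a pair $p \ne q$ in $[n]$ exists exactly when $n \ge 2$. This is exactly the argument already sketched in the paragraph preceding the corollary, so the formal proof is a one-line appeal to Theorem~\ref{r1-gm}.
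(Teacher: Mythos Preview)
Your proposal is correct and follows exactly the paper's approach: verify that $A^+(B_n)$ is not a band by exhibiting the non-idempotent constant map $\xi_{(p,q)}$ with $p\ne q$, then apply Theorem~\ref{r1-gm}. The only addition you make is spelling out the easy cardinality check and the computation $\xi_{(p,q)}+\xi_{(p,q)}=\xi_\vartheta$, which the paper leaves implicit.
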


Now, in the remaining section, we construct a generating set of the minimum cardinality of $A^+(B_n)$ and obtain its lower rank in Theorem \ref{r2-a+bn}.  Consider the subsets \[\mathcal{S} = \{ \xi_{(i, i + 1)} \mid i \in [n - 1] \} \cup \{ \xi_{(n, 1)} \}\] and
\[\mathcal{T} = \{g + h \mid g \in Aut(B_n), \; h \in \mathcal{S} \} \] of $A^+(B_n)$. We develop a proof of Theorem \ref{r2-a+bn} through a sequence of lemmas by showing that the set $\mathcal{S} \cup \mathcal{T}$ serves our purpose.

\begin{lemma}\label{gen-cm-a+bn}
For $n \ge 2$, $\langle\mathcal{S}\rangle = \mathcal{C}_{B_n}$.
\end{lemma}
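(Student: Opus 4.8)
The plan is to show that the subsemigroup of $(A^+(B_n),+)$ generated by $\mathcal{S}$ is exactly the set $\mathcal{C}_{B_n}$ of all constant maps. One inclusion is immediate: each generator $\xi_{(i,i+1)}$ (and $\xi_{(n,1)}$) is a constant map, and by Remark \ref{r.ad.a+bn}(1) together with the addition rule in $B_n$, the pointwise sum of two constant maps $\xi_a + \xi_b$ equals the constant map $\xi_{a+b}$; hence $\langle\mathcal{S}\rangle \subseteq \mathcal{C}_{B_n}$. Moreover, since $\mathcal{C}_{B_n}$ is closed under $+$, the assignment $\xi_a \mapsto a$ identifies $(\mathcal{C}_{B_n},+)$ with $(B_n,+)$ as semigroups, so the problem reduces to showing that the set $S_0 = \{(i,i+1) : i \in [n-1]\} \cup \{(n,1)\}$ generates all of $B_n$ under the Brandt addition.

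First I would compute the ``cyclic'' sums: starting from $(1,2)$ and adding $(2,3), (3,4), \ldots$ in succession produces $(1,3), (1,4), \ldots, (1,n)$, and then adding $(n,1)$ yields $(1,1)$. More generally, by concatenating the appropriate consecutive block of generators (wrapping around via $(n,1)$ when needed) one obtains $(i,j)$ for every pair $i,j \in [n]$: the chain $(i,i+1)+(i+1,i+2)+\cdots+(j-1,j)$ telescopes to $(i,j)$, and using the wrap-around generator $(n,1)$ handles the case $i > j$ as well as all the ``diagonal'' elements $(i,i)$. Thus every element of $[n]\times[n]$ lies in $\langle S_0\rangle$. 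Finally, $\vartheta$ is obtained as an incompatible sum, e.g. $(1,2) + (1,2)$ (valid as soon as $n \ge 2$, since then $(1,2)$ has second coordinate $2 \ne 1$, giving $\vartheta$); this also confirms $\langle\mathcal{S}\rangle$ is not all of $A^+(B_n)$ but does contain $\xi_\vt$. Hence $\langle S_0\rangle = B_n$, equivalently $\langle\mathcal{S}\rangle = \mathcal{C}_{B_n}$.

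The only mild subtlety — not really an obstacle — is bookkeeping the wrap-around: to reach $(i,j)$ with $i \ge j$ one adds the block from $(i,i+1)$ up through $(n,1)$ and then continues from $(1,2)$ up through $(j-1,j)$, so the argument should be phrased as ``for any $i,j$ there is a directed walk in the cycle $1 \to 2 \to \cdots \to n \to 1$ from $i$ to $j$, and summing the generators along its edges yields $(i,j)$.'' Everything else is the routine observation that consecutive Brandt sums telescope and that a single mismatched sum produces $\vartheta$.
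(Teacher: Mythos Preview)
Your proof is correct and follows essentially the same approach as the paper: both arguments telescope the ``cyclic'' generators to reach $\xi_{(i,j)}$ (splitting into the cases $i<j$ and $i\ge j$ with wrap-around through $\xi_{(n,1)}$) and obtain $\xi_\vartheta$ as a mismatched sum such as $\xi_{(1,2)}+\xi_{(1,2)}$. Your version is slightly more careful in that you also record the easy containment $\langle\mathcal{S}\rangle\subseteq\mathcal{C}_{B_n}$ via the isomorphism $\mathcal{C}_{B_n}\cong B_n$, which the paper leaves implicit.
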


\begin{proof}
Let $f \in \mathcal{C}_{B_n}$; then, either $f = \xi_\vt$ or $f = \xi_{(i, j)}$. If $f  = \xi_\vt$, then, for $p \in [n-1]$, write   $\xi_\vt = \xi_{(p, p + 1)} + \xi_{(p, p + 1)}$ so that $f \in \langle \mathcal{S}  \rangle$. If $f = \xi_{(i, j)}$, then, for $i < j$, we have \[\xi_{(i, j)} = \xi_{(i, i + 1)} + \xi_{(i + 1, i + 2)} + \cdots + \xi_{(j - 1, j)},\] and, for $i \ge j$, \[\xi_{(i, j)} = \xi_{(i, i + 1)} + \xi_{(i + 1, i + 2)} + \cdots + \xi_{(n - 1, n)} + \xi_{(n, 1)} + \xi_{(1, 2)}+  \cdots +\xi_{(j-1, j)}\] so that $f \in \langle \mathcal{S}\rangle$.
\end{proof}

\begin{lemma}\label{gen-a+bn}
If $X \subset A^+(B_n)$ such that $\langle X \rangle = \mathcal{C}_{B_n}$, then $\langle X \cup \mathcal{T} \rangle = A^+(B_n)$.
\end{lemma}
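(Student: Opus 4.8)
The plan is to show that, starting from a generating set $X$ of $\mathcal{C}_{B_n}$ together with $\mathcal{T}$, we can reach the other two families of elements of $A^+(B_n)$ described in Theorem~\ref{t.class.a+bn}, namely the $n$-support maps $(p,q;\sigma)$ and the singleton support maps $\sis{(k,l)}{(p,q)}$. Since $\langle X\cup\mathcal{T}\rangle$ already contains all of $\mathcal{C}_{B_n}$ and all the generators $g+h$ with $g\in Aut(B_n)$, $h\in\mathcal{S}$, it suffices to produce every $n$-support map and every singleton support map as a sum of elements we already have in hand.

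First I would handle the $n$-support maps. By Remark~\ref{r.ad.a+bn}(2), any $n$-support map is of the form $g+\xi_{(r,s)}$ for some $g\in Aut(B_n)$ and $(r,s)\in[n]\times[n]$; more precisely $(p,q;\sigma)=\phi_\sigma+\xi_{(p\sigma,q)}$. The idea is to write $\xi_{(p\sigma,q)}$, using Lemma~\ref{gen-cm-a+bn} (or rather the argument inside it, which expresses an arbitrary constant map as a sum of elements of $\mathcal{S}$), as $h+h'$ with $h\in\mathcal{S}$ and $h'\in\mathcal{C}_{B_n}$; then $(p,q;\sigma)=(\phi_\sigma+h)+h'$, where $\phi_\sigma+h\in\mathcal{T}$ and $h'\in\mathcal{C}_{B_n}\subseteq\langle X\rangle$. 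One has to be slightly careful when $p\sigma=q$ so that the target constant map $\xi_{(p\sigma,q)}$ is idempotent but not in $\mathcal{S}$ — but the decomposition of $\xi_{(i,j)}$ given in the proof of Lemma~\ref{gen-cm-a+bn} always begins with some $\xi_{(i,i+1)}\in\mathcal{S}$ (reading indices cyclically), so the first summand can always be split off as an element of $\mathcal{S}$, and the remaining summands lie in $\mathcal{C}_{B_n}=\langle X\rangle$. This shows every $n$-support map lies in $\langle X\cup\mathcal{T}\rangle$.

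Next I would handle the singleton support maps. By Remark~\ref{r.ad.a+bn}(3), $\sis{(k,l)}{(p,q)}=\xi_{(p,q)}+g$ where $g=(l,q;\rho)$ is an $n$-support map (with $k\rho=q$). Since $\xi_{(p,q)}\in\mathcal{C}_{B_n}=\langle X\rangle$ and the $n$-support map $g$ has just been shown to lie in $\langle X\cup\mathcal{T}\rangle$, the sum $\sis{(k,l)}{(p,q)}$ also lies in $\langle X\cup\mathcal{T}\rangle$. Combining the three families — constant maps (given), $n$-support maps, and singleton support maps — and invoking Theorem~\ref{t.class.a+bn} to see these exhaust $A^+(B_n)$, we conclude $\langle X\cup\mathcal{T}\rangle=A^+(B_n)$.

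The only genuine obstacle is bookkeeping: verifying that the chosen affine decompositions from Remark~\ref{r.ad.a+bn} really do produce, via Remark~\ref{r.ksupport-sum}, elements of exactly the claimed support type after each addition, and that the leading summand in the expansion of an arbitrary constant map can always be taken in $\mathcal{S}$ so that $\phi_\sigma+h\in\mathcal{T}$. These are all routine checks using the multiplication table of $B_n$ and the explicit formulas already recorded; no new ideas are needed beyond assembling the pieces from Lemma~\ref{gen-cm-a+bn} and Remark~\ref{r.ad.a+bn}.
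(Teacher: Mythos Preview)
Your proposal is correct and follows essentially the same approach as the paper's proof: reduce to generating the $n$-support maps (since singleton support maps are sums of a constant and an $n$-support map via Remark~\ref{r.ad.a+bn}(3)), write any $n$-support map as $g+\xi_c$ with $g\in Aut(B_n)$, expand $\xi_c$ as a sum of elements of $\mathcal{S}$ via Lemma~\ref{gen-cm-a+bn}, and absorb the leading summand into $g$ to land in $\mathcal{T}$ while the remaining summands lie in $\mathcal{C}_{B_n}=\langle X\rangle$. Your extra worry about the case $p\sigma=q$ is unnecessary---the decompositions in Lemma~\ref{gen-cm-a+bn} are always sums of elements of $\mathcal{S}$, so the first summand is automatically in $\mathcal{S}$ in every case.
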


\begin{proof}
Since the constant elements of $A^+(B_n)$ are generated by $X$, in view of Theorem \ref{t.class.a+bn}, it is sufficient to prove that the $n$-support and singleton support elements are generated by $X \cup \mathcal{T}$. However, since every singleton support map is a sum of a constant map and an $n$-support map (cf. Remark \ref{r.ad.a+bn}(3)), we will now observe that $X \cup \mathcal{T}$ generates the $n$-support maps of $A^+(B_n)$.

Let $f \in A^+(B_n)_n$. By Remark \ref{r.ad.a+bn}(2), $f = g + \xi_c$ for some $g \in Aut(B_n)$ and $c \in B_n \setminus \{\vt\}$. By Lemma \ref{gen-cm-a+bn}, write $\xi_c = \displaystyle \sum_{i = 1}^{k}f_i$, for some $f_i$'s from $\mathcal{S}$ so that $$f = g + \displaystyle \sum_{i = 1}^{k}f_i = g + f_1 + \displaystyle \sum_{i = 2}^{k}f_i.$$ Note that $g + f_1 \in \mathcal{T}$ and each $f_i$ is a sum of elements of $X$. Hence, $f \in  \langle X \cup \mathcal{T}  \rangle$.
\end{proof}

\begin{lemma}\label{gen-nsm-a+bn}
For $g_1, g_2, g_3 \in Aut(B_n)$ with $g_2 \ne g_3$ and $p, q, s, t \in [n]$ such that $p \ne s$, we have the following.
\begin{enumerate}
\item If $f_1 = g_1 + \xi_{(p, q)}$ and  $f_1' = g_1 + \xi_{(s, t)}$, then $f_1 \ne f_1'$.
\item If $f_2 = g_2 + \xi_{(p, q)}$ and  $f_3 = g_3 + \xi_{(p, q)}$, then $f_2 \ne f_3$.
\end{enumerate}
Hence, $|\mathcal{T}| = n(n!)$.
\end{lemma}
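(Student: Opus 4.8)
The plan is to prove the two non-equality assertions separately and then use them to count $\mathcal{T}$. For part~(1), I would use the explicit description of sums with automorphisms from Remark~\ref{r.ad.a+bn}(2): if $g_1 = \phi_\rho$, then $f_1 = g_1 + \xi_{(p,q)}$ is the $n$-support map $(p\rho^{-1}, q; \rho)$ and $f_1' = g_1 + \xi_{(s,t)}$ is the $n$-support map $(s\rho^{-1}, t; \rho)$. Since $p \ne s$, applying the bijection $\rho^{-1}$ gives $p\rho^{-1} \ne s\rho^{-1}$, so the two maps have different ``first coordinate'' parameter; concretely, one sends the column indexed by $p\rho^{-1}$ to something non-$\vt$ while the other sends that column to $\vt$, so the supports already differ and $f_1 \ne f_1'$. (Alternatively, one can evaluate both maps at a point of $\mathrm{supp}(f_1)$ and observe the values disagree.)

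For part~(2), write $g_2 = \phi_{\rho_2}$ and $g_3 = \phi_{\rho_3}$ with $\rho_2 \ne \rho_3$ (using Proposition~\ref{sn-iso-autbn}). Pick $i \in [n]$ with $i\rho_2 \ne i\rho_3$. Evaluating $f_2 = g_2 + \xi_{(p,q)}$ and $f_3 = g_3 + \xi_{(p,q)}$ at the element $(i, p) \in B_n$: we get $(i,p)f_2 = (i,p)g_2 + (p,q) = (i\rho_2, p\rho_2) + (p,q)$ and similarly $(i,p)f_3 = (i\rho_3, p\rho_3) + (p,q)$. These are $n$-support maps sending $(i,p)$ to $(i\rho_2, q)$ and $(i\rho_3, q)$ respectively (this is exactly the form $(p\rho_2^{-1}, q; \rho_2)$ from the remark, evaluated appropriately), and since $i\rho_2 \ne i\rho_3$ the two outputs differ, hence $f_2 \ne f_3$.

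Finally, for the cardinality count: every element of $\mathcal{T}$ has the form $g + h$ with $g \in Aut(B_n)$ and $h \in \mathcal{S}$, and $\mathcal{S}$ consists precisely of the $n$ constant maps $\xi_{(1,2)}, \xi_{(2,3)}, \dots, \xi_{(n-1,n)}, \xi_{(n,1)}$ — note these all have distinct values since the pairs $(i, i+1)$ and $(n,1)$ are pairwise distinct, with the first coordinates being $1, 2, \dots, n-1, n$ all different. So $\mathcal{T}$ is the image of $Aut(B_n) \times \mathcal{S}$ under $(g, h) \mapsto g + h$, a set of size at most $|Aut(B_n)| \cdot |\mathcal{S}| = n! \cdot n$. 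Part~(1) shows that for fixed $g$, varying $h$ over $\mathcal{S}$ gives distinct elements (the second arguments $(i,i+1)$ resp.\ $(n,1)$ have pairwise distinct \emph{first} coordinates, so part~(1) applies to every pair); part~(2) shows that for fixed $h = \xi_{(p,q)}$, varying $g$ gives distinct elements. Combining, the map $(g,h) \mapsto g+h$ is injective, so $|\mathcal{T}| = n(n!)$.

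The main obstacle is purely bookkeeping: making sure the hypotheses ``$p \ne s$'' in part~(1) genuinely cover all pairs of distinct elements of $\mathcal{S}$ — which they do, since the first coordinates of the pairs in $\mathcal{S}$ are $1, \ldots, n$, all distinct — and being careful that a single automorphism $g$ does not accidentally identify $g + h_1$ with $g + h_2$ for $h_1 \ne h_2$ in $\mathcal{S}$, which is precisely what part~(1) rules out. No delicate estimates are involved; the argument is a direct computation using the normal forms for $n$-support maps recalled in Remark~\ref{r.ad.a+bn}.
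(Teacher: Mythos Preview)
Your argument for part~(1) is correct and matches the paper's proof exactly: both use the normal form from Remark~\ref{r.ad.a+bn}(2) to see that $f_1$ and $f_1'$ have distinct supports.

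Part~(2), however, has a genuine gap. You evaluate $f_2$ and $f_3$ at $(i,p)$ and claim the values are $(i\rho_2,q)$ and $(i\rho_3,q)$. But $(i,p)f_2 = (i\rho_2,\,p\rho_2) + (p,q)$ equals $(i\rho_2,q)$ \emph{only when} $p\rho_2 = p$; otherwise it is $\vartheta$. In particular, if neither $\rho_2$ nor $\rho_3$ fixes $p$, then $(i,p)f_2 = (i,p)f_3 = \vartheta$ for every $i$, and your evaluation fails to distinguish the two maps. The point $(i,p)$ need not lie in the support of either $f_2$ or $f_3$, whose supports are the columns indexed by $p\rho_2^{-1}$ and $p\rho_3^{-1}$, respectively.

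The paper repairs this by first splitting into cases: if $p\rho_2^{-1} \ne p\rho_3^{-1}$ the supports already differ and we are done; if $p\rho_2^{-1} = p\rho_3^{-1}$, then one evaluates at $(i_0,\,p\rho_2^{-1})$, a point lying in the common support, and there the outputs are indeed $(i_0\rho_2,q)$ and $(i_0\rho_3,q)$, which differ for a suitable $i_0$. Your argument is easily corrected along these lines.

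A smaller point: the step ``combining, the map $(g,h)\mapsto g+h$ is injective'' does not follow from parts~(1) and~(2) by pure logic (injectivity in each variable separately does not give joint injectivity). It does follow once you observe, via the normal form $(p\rho^{-1},q;\rho)$, that $g$ can be recovered from $g+h$ (the permutation component), after which part~(1) recovers $h$. The paper is equally terse on this point, so this is more a remark than a criticism.
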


\begin{proof}
As per Proposition \ref{sn-iso-autbn}, for $1 \le i \le 3$, let $g_i = \phi_{\sigma_i}$ so that $f_1, f_1', f_2, f_3$ are $n$-support maps represented by $(p \sigma_1^{-1}, q; \sigma_1)$,  $(s \sigma_1^{-1}, t; \sigma_1)$, $(p \sigma_2^{-1}, q; \sigma_2)$ and $(p \sigma_3^{-1}, q; \sigma_3)$,  respectively (cf. Remark \ref{r.ad.a+bn}(2)).

(1) Since $p \ne s$ and $\sigma_1$ is a permutation on $[n]$, $f_1$ and $f_1'$ have different support so that $f_1 \ne f_1'$.

(2) If $p \sigma_2^{-1} \ne p \sigma_3^{-1}$, then we are done. Otherwise, since $\sigma_2 \ne \sigma_3$, there exists $i_0 \in [n]$ such that $i_0 \sigma_2 \ne i_0 \sigma_3$. Now, \[(i_0, p \sigma_2^{-1})f_2 = (i_0 \sigma_2, q) \ne (i_0 \sigma_3, q) = (i_0, p \sigma_3^{-1})f_3 \]so that $f_2 \ne f_3$.

Now, since $|Aut(B_n)| = n!$ (cf. Proposition \ref{sn-iso-autbn}) and there are $n$ elements in $\mathcal{S}$, we have
$|\mathcal{T}| = n(n!)$.

\end{proof}

\begin{lemma}\label{l.fs-sum}
For $1 \le i \le k$, let $f, f_i \in A^+(B_n)$ such that $f = \displaystyle{\sum_{i = 1}^{k}}f_i$.
\begin{enumerate}
\item If $f \in A^+(B_n)_{n^2 + 1}$, then $f_1 \in R_f$ and $f_k \in L_f$.
\item If $f \in A^+(B_n)_{n}$, then $f_1 \in R_f$.
\end{enumerate}
\end{lemma}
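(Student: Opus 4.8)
The plan is to play the support-shrinking behaviour of $+$ against the coordinate descriptions of $\mathcal{R}$ and $\mathcal{L}$ in Theorem~\ref{t.gr-rl}. The elementary facts I will use are that $\vt$ is absorbing, so ${\rm supp}(g + h) \subseteq {\rm supp}(g)$ and ${\rm supp}(g + h) \subseteq {\rm supp}(h)$ for all $g, h \in A^+(B_n)$ (a sharpening of Remark~\ref{r.ksupport-sum}), and that, by Theorem~\ref{t.class.a+bn}, the support sizes realised in $A^+(B_n)$ are exactly $0$, $1$, $n$ and $n^2 + 1$, the only full-support elements being the non-$\vt$ constant maps. One may assume $k \ge 2$, since for $k = 1$ the statement is immediate as $f_1 = f_k = f$.

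For part~(1), write $f = f_1 + (f_2 + \cdots + f_k)$ and $f = (f_1 + \cdots + f_{k - 1}) + f_k$. As ${\rm supp}(f) = B_n$ has the maximal cardinality $n^2 + 1$, the containments above force $f_1$ and $f_k$ to have full support as well, so $f_1 = \xi_d$, $f_k = \xi_{d'}$ and $f = \xi_c$ with $c, d, d' \neq \vt$. A short computation with the addition of $B_n$ then finishes the job: from $\xi_d + (f_2 + \cdots + f_k) = \xi_c$ one sees that $f_2 + \cdots + f_k$ must itself be a non-$\vt$ constant map $\xi_e$ (otherwise a $\vt$ would surface on the left), and then $d + e = c$ forces $d\pi_1 = c\pi_1$; together with ${\rm supp}(f_1) = B_n = {\rm supp}(f)$ this is exactly the criterion of Theorem~\ref{t.gr-rl}(1), so $f_1 \in R_f$. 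Symmetrically, $f_1 + \cdots + f_{k - 1}$ is a non-$\vt$ constant map $\xi_{e'}$ with $e' + d' = c$, whence $d'\pi_2 = c\pi_2$, and since $f_k, f \in \mathcal{C}_{B_n} \setminus \{\xi_\vt\}$ this is the criterion of Theorem~\ref{t.gr-rl}(2), so $f_k \in L_f$.

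For part~(2), the containment ${\rm supp}(f) \subseteq {\rm supp}(f_1)$ gives $|{\rm supp}(f_1)| \ge n$, so $f_1$ is either an $n$-support map or a non-$\vt$ constant map. The constant case is excluded: if $f_1 = \xi_d$, then for each $\alpha \in {\rm supp}(f)$ the value $\alpha f = d + \alpha(f_2 + \cdots + f_k)$ has first coordinate $d\pi_1$, independent of $\alpha$, whereas for $f = (p, q; \sigma)$ one has $\alpha f\pi_1 = i\sigma$ at $\alpha = (i, p)$, which runs over all of $[n]$, a contradiction since $n \ge 2$. Hence $f_1 = (p_1, q_1; \sigma_1)$ is an $n$-support map; comparing ${\rm supp}(f_1) = \{(i, p_1) : i \in [n]\}$ with ${\rm supp}(f) = \{(i, p) : i \in [n]\}$, the latter contained in the former and both of size $n$, gives $p_1 = p$, so ${\rm supp}(f_1) = {\rm supp}(f)$. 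Finally, for $\alpha = (i, p) \in {\rm supp}(f)$, expanding $\alpha f = (i\sigma_1, q_1) + \alpha(f_2 + \cdots + f_k)$ in $B_n$, the left-hand side being nonzero, forces $i\sigma_1 = i\sigma$, that is $\alpha f_1\pi_1 = \alpha f\pi_1$; Theorem~\ref{t.gr-rl}(1) then gives $f_1 \in R_f$.

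I do not expect a genuine obstacle: the structural input is already available and only a couple of routine $B_n$-additions remain. The two points needing minor care are to check, each time Theorem~\ref{t.gr-rl} is invoked, that the maps involved lie in $A^+(B_n) \setminus \{\xi_\vt\}$ (automatic, since full-support and $n$-support maps are never $\xi_\vt$), and, in part~(1), to note that $f$ being a non-$\vt$ constant map forces $f_k$ to be one too, so that the $\mathcal{L}$-criterion of Theorem~\ref{t.gr-rl}(2), stated only among constant maps, does apply.
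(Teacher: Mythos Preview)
Your argument is correct. Part~(1) is essentially the paper's proof: both observe that full support of $f$ forces each summand to be a non-$\vt$ constant, and then read off the $\mathcal{R}$- and $\mathcal{L}$-conditions from the first and last coordinates of the resulting chain of constants.

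Part~(2) is where you diverge. The paper argues globally: it shows that among $f_1,\dots,f_k$ there is exactly one $n$-support map, necessarily $f_1$, and that $f_2 + \cdots + f_k$ is a non-$\vt$ constant $\xi_{(r,q)}$; this uses an external fact (\cite[Proposition~2.9]{a.jk13}) that a constant plus an $n$-support map has support at most~$1$. From this full decomposition it reads off $f_1 = (k,r;\sigma)$ and concludes $f_1 \mathcal{R} f$. You instead work locally at $f_1$ only: the containment ${\rm supp}(f) \subseteq {\rm supp}(f_1)$ rules out the constant case by a first-coordinate count, pins down $p_1 = p$ by equality of supports, and then the $B_n$-identity $(a,b)+x$ nonzero $\Rightarrow$ first coordinate $a$ gives $\sigma_1 = \sigma$ directly. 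This is a cleaner, self-contained route to the stated conclusion and avoids the external citation. The trade-off is that the paper's proof actually establishes more than the lemma asserts, namely that $\sum_{i\ge 2} f_i$ is a constant $\xi_{(r,q)}$; this stronger fact is tacitly invoked later (e.g.\ in the proof of Theorem~\ref{lb-r4a+bn}), so if you adopt your argument you should either record that extra conclusion separately or supply it where it is needed.
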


\begin{proof}
(1) If $f \in A^+(B_n)_{n^2 + 1}$, then $f_i \in A^+(B_n)_{n^2 + 1}$, for all $i$ (cf.  Remark \ref{r.ksupport-sum}). Let $f = \xi_{(p, q)}$ and $f_i = \xi_{(p_i, q_i)}$ so that \[\xi_{(p, q)} = \displaystyle{\sum_{i = 1}^{k}}\xi_{(p_i, q_i)}.\] Then clearly, for $1 \le i \le k - 1$,  $q_i = p_{i + 1}$  and $p_1 = p$, $q_k = q$. Hence, by Theorem \ref{t.gr-rl}, $f_1 \in R_f$ and $f_k \in L_f$.\\

(2) If $f \in A^+(B_n)_{n}$, then $|{\rm supp}(f_i)| \ge n$, for all $i$ (cf. Remark \ref{r.ksupport-sum}). Then, for each $i$, $|{\rm supp}(f_i)| = n$ or $n^2 + 1$ (cf. Theorem \ref{t.class.a+bn}). Note that, there exists $j$ ($1 \le j \le k$) such that $|{\rm supp}(f_j)| = n$; otherwise, $f$ will be a constant map. If $j \ge 2$, then, by \cite[Proposition 2.9]{a.jk13}, $|{\rm supp}(f_{j-1} + f_{j})| \le 1$ so that $|{\rm supp}(f)| \le 1$; a contradiction. Thus, we have  $j = 1$ and, for all $i > 1$, $|{\rm supp}(f_i)| = n^2 + 1$.

Let $(k, q; \sigma)$  and $(k', p; \tau)$ be the representations of $f$ and $f_1$, respectively,  and $\displaystyle \sum_{i = 2}^{k} f_i = \xi_{(r, s)}$ for some $r, s \in [n]$. Then, note that $p = r$, $k' = k$, $\tau = \sigma$ and $s = q$. Thus, $f_1 = (k, r; \sigma)$ and $\displaystyle \sum_{i = 2}^{k} f_i = \xi_{(r, q)}$ for some $r \in [n]$. Hence, by Theorem \ref{t.gr-rl}, $f_1 \in R_f$.
\end{proof}

\begin{lemma}\label{gen-set-prop}
Every generating set of $A^+(B_n)$ contains at least
\begin{enumerate}
\item $n$ elements of full support, and
\item $n(n!)$ elements of $n$-support.
\end{enumerate}
\end{lemma}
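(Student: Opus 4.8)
The plan is to reduce both parts to counting $\mathcal{R}$-classes, exploiting that in any decomposition $f=f_1+\cdots+f_k$ the first summand $f_1$ lies in the $\mathcal{R}$-class of $f$ (Lemma~\ref{l.fs-sum}), together with the fact that $\mathcal{R}$-related elements have equal support (Theorem~\ref{t.gr-rl}(1)). First I would pin down the two families explicitly: by Theorem~\ref{t.class.a+bn}, the only elements of $A^+(B_n)$ with full support are the $n^2$ nonzero constant maps $\xi_{(p,q)}$, and by Theorem~\ref{t.gr-rl}(1) one has $\xi_{(p,q)}\,\mathcal{R}\,\xi_{(p',q')}$ iff $p=p'$; hence the full-support elements fall into exactly $n$ $\mathcal{R}$-classes. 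For the $n$-support maps, Theorem~\ref{t.gr-rl}(3) records that there are $n(n!)$ $\mathcal{R}$-classes among them, and each such class consists entirely of $n$-support maps since $\mathcal{R}$ preserves support.

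For (1), let $U$ be any generating set of $A^+(B_n)$ and let $f$ be a full-support element. Writing $f=f_1+\cdots+f_k$ with each $f_i\in U$, Lemma~\ref{l.fs-sum}(1) gives $f_1\in R_f$ (the case $k=1$ being trivial), and then $f_1$ has full support by Theorem~\ref{t.gr-rl}(1). Thus every full-support $\mathcal{R}$-class contains a (full-support) element of $U$; since there are $n$ such classes and distinct $\mathcal{R}$-classes are disjoint, $U$ contains at least $n$ full-support elements.

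For (2), the argument is the same in outline. With $U$ a generating set and $f$ an $n$-support element, write $f=f_1+\cdots+f_k$ with $f_i\in U$; by Lemma~\ref{l.fs-sum}(2), $f_1\in R_f$, and $f_1$ is then an $n$-support element of $U$. Hence $U$ meets each of the $n(n!)$ $\mathcal{R}$-classes of $n$-support maps (Theorem~\ref{t.gr-rl}(3)), so $U$ contains at least $n(n!)$ elements of $n$-support.

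I do not expect a genuine obstacle here: the substance has already been packaged into Lemma~\ref{l.fs-sum} and the $\mathcal{R}$-class counts of Theorem~\ref{t.gr-rl}. The only points that need care are the verification that the full-support elements form precisely $n$ $\mathcal{R}$-classes (immediate from Theorems~\ref{t.class.a+bn} and~\ref{t.gr-rl}(1)) and the degenerate case $k=1$ in the decompositions, which is trivial.
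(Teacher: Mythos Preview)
Your proposal is correct and follows essentially the same approach as the paper: both arguments show that a generating set must meet every $\mathcal{R}$-class of full-support (respectively $n$-support) elements via Lemma~\ref{l.fs-sum}, and then count those classes. The only cosmetic difference is that the paper works with the explicit representative sets $\mathcal{S}$ and $\mathcal{T}$ and verifies their members lie in pairwise distinct $\mathcal{R}$-classes, whereas you count the $\mathcal{R}$-classes directly from Theorem~\ref{t.gr-rl}(1) and~(3); the content is the same.
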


\begin{proof}
Let $V$ be a generating set of $A^+(B_n)$. For $f \in A^+(B_n)$, write $f = \displaystyle \sum_{i = 1}^{k}f_i$, for some $f_i \in V$.

(1) If $f \in \mathcal{S}$, then by Lemma \ref{l.fs-sum}(1), $f_1 \in R_f$ so that $R_f \cap V  \ne \vn$. Further, one can observe that if $g, h \in \mathcal{S}$ with $g \ne h$, then $R_g \cap R_h = \vn$ (cf. Theorem \ref{t.gr-rl}(1)). Hence, since $\mathcal{S}$ has $n$ full support elements, $V$ will have at least $n$ full support elements.

(2) If $f \in \mathcal{T}$, then by Lemma \ref{l.fs-sum}(2), we have $R_f \cap V  \ne \vn$.  For $g, h \in \mathcal{T}$ with $g \ne h$, we show that  $R_g \cap R_h = \vn$ so that $|V| \ge |\mathcal{T}| = (n!)n$. In view of Remark \ref{r.ad.a+bn}(2), by considering $n \mod n = n$, write $g = (r\sigma^{-1}, r + 1 \mod n; \sigma)$ and $h = (s\rho^{-1}, s + 1 \mod n; \rho)$. Since $g \ne h$, either $\sigma \ne \rho$ or $r \ne s$. Hence, by Theorem \ref{t.gr-rl}(1), $R_g \ne R_h$ in either case.
\end{proof}

In view of lemmas \ref{gen-cm-a+bn} and \ref{gen-a+bn}, we have the following corollary of Lemma \ref{gen-set-prop}.

\begin{corollary}\label{sut-min-car}
$|\mathcal{S} \cup \mathcal{T}|$ is the minimum such that $\langle \mathcal{S} \cup \mathcal{T} \rangle = A^+(B_n)$.
\end{corollary}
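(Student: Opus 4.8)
The plan is to combine the three ingredients already assembled. First I would observe that Lemma~\ref{gen-cm-a+bn} gives $\langle \mathcal{S} \rangle = \mathcal{C}_{B_n}$, and then Lemma~\ref{gen-a+bn}, applied with $X = \mathcal{S}$, immediately yields $\langle \mathcal{S} \cup \mathcal{T} \rangle = A^+(B_n)$. So $\mathcal{S} \cup \mathcal{T}$ is a generating set; it remains to check it has minimum cardinality, i.e. $|\mathcal{S} \cup \mathcal{T}| = r_2(A^+(B_n))$.

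Next I would compute $|\mathcal{S} \cup \mathcal{T}|$. Clearly $|\mathcal{S}| = n$ by construction, and $|\mathcal{T}| = n(n!)$ by Lemma~\ref{gen-nsm-a+bn}. Since every element of $\mathcal{S}$ has full support ($n^2+1$) while every element of $\mathcal{T}$ is an $n$-support map (being of the form $g + h$ with $g \in Aut(B_n)$, $h$ constant, hence an $n$-support map by Remark~\ref{r.ad.a+bn}(2)), the two sets are disjoint. Therefore $|\mathcal{S} \cup \mathcal{T}| = n + n(n!)$.

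For the lower bound I would invoke Lemma~\ref{gen-set-prop}: any generating set $V$ of $A^+(B_n)$ must contain at least $n$ full-support elements and at least $n(n!)$ elements of $n$-support. Since these two families are again disjoint (full support means $n^2+1$ support, which is $\ne n$ for $n \ge 2$), we get $|V| \ge n + n(n!)$. Combining with the generating set $\mathcal{S} \cup \mathcal{T}$ exhibited above, the minimum is exactly $n + n(n!)$, which is precisely $|\mathcal{S} \cup \mathcal{T}|$. This establishes the corollary.

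There is essentially no obstacle here, since all the real work has been done in the preceding lemmas; the only point requiring a moment's care is the disjointness bookkeeping — making sure the $n$ full-support generators and the $n(n!)$ $n$-support generators are counted as genuinely distinct elements, which follows because for $n \ge 2$ we have $n < n^2 + 1$ so support sizes $n$ and $n^2+1$ never coincide.
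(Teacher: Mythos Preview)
Your proposal is correct and follows essentially the same approach as the paper: the corollary is presented there as an immediate consequence of Lemmas~\ref{gen-cm-a+bn}, \ref{gen-a+bn}, and \ref{gen-set-prop}, and your write-up simply makes explicit the disjointness bookkeeping (that full-support and $n$-support elements cannot coincide for $n \ge 2$) which the paper leaves implicit.
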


Combining the results from Lemma \ref{gen-cm-a+bn} through Corollary \ref{sut-min-car}, we have the following main theorem of the section.

\begin{theorem}\label{r2-a+bn}
For $n \ge 2$, $r_2(A^+(B_n)) = n(n! + 1)$.
\end{theorem}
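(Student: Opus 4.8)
The plan is to derive Theorem \ref{r2-a+bn} directly by combining the pieces already assembled. By definition, $r_2(A^+(B_n))$ is the minimum cardinality of a generating set, so it suffices to (i) exhibit a generating set of size $n(n!+1)$ and (ii) show no generating set can be smaller. For the upper bound, I would observe that $\mathcal{S} \cup \mathcal{T}$ is a generating set: Lemma \ref{gen-cm-a+bn} gives $\langle \mathcal{S} \rangle = \mathcal{C}_{B_n}$, and then Lemma \ref{gen-a+bn} (applied with $X = \mathcal{S}$) yields $\langle \mathcal{S} \cup \mathcal{T} \rangle = A^+(B_n)$. Counting: $|\mathcal{S}| = n$ by construction, and $|\mathcal{T}| = n(n!)$ by Lemma \ref{gen-nsm-a+bn}; since $\mathcal{S}$ consists of constant maps and $\mathcal{T}$ consists of $n$-support maps (cf. Remark \ref{r.ad.a+bn}(2)), these sets are disjoint, whence $|\mathcal{S} \cup \mathcal{T}| = n + n(n!) = n(n!+1)$. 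This shows $r_2(A^+(B_n)) \le n(n!+1)$.

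For the lower bound, Lemma \ref{gen-set-prop} shows that every generating set $V$ of $A^+(B_n)$ contains at least $n$ elements of full support and at least $n(n!)$ elements of $n$-support. Since a map cannot simultaneously be of full support ($n^2+1$) and of $n$-support (these are different support sizes as $n \ge 2$ forces $n^2 + 1 \ne n$), these two collections of elements inside $V$ are disjoint, so $|V| \ge n + n(n!) = n(n!+1)$. Hence $r_2(A^+(B_n)) \ge n(n!+1)$, and combined with the upper bound we conclude $r_2(A^+(B_n)) = n(n!+1)$. Alternatively, one may simply invoke Corollary \ref{sut-min-car}, which already records that $|\mathcal{S} \cup \mathcal{T}|$ is the minimum cardinality of a generating set; the theorem then follows immediately once $|\mathcal{S} \cup \mathcal{T}|$ is computed.

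There is essentially no obstacle here: all the substantive work has been done in the preceding lemmas. The only point requiring a moment's care is the disjointness of the "full support" and "$n$-support" quotas in the counting argument, which is immediate from $n^2 + 1 \neq n$ for $n \ge 2$, together with noting that the $n$ full-support generators guaranteed by Lemma \ref{gen-set-prop}(1) and the $n(n!)$ $n$-support generators guaranteed by Lemma \ref{gen-set-prop}(2) are counted via disjoint $\mathcal{R}$-classes and hence cannot overlap. So the proof is really just a bookkeeping assembly: quote Lemma \ref{gen-a+bn} and Lemma \ref{gen-nsm-a+bn} for the existence of a generating set of the claimed size, quote Lemma \ref{gen-set-prop} (or Corollary \ref{sut-min-car}) for minimality, and add the cardinalities.
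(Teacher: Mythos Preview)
Your proposal is correct and follows essentially the same approach as the paper: the theorem is stated as an immediate consequence of the preceding lemmas, with Corollary \ref{sut-min-car} already packaging the minimality of $|\mathcal{S}\cup\mathcal{T}|$ and Lemma \ref{gen-nsm-a+bn} supplying the count $|\mathcal{T}|=n(n!)$. Your explicit remark that full-support and $n$-support elements are disjoint (since $n^2+1\neq n$) is a small clarifying detail the paper leaves implicit, but otherwise the argument is the same assembly of pieces.
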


\section{Intermediate rank}

In this section, after ascertaining certain relevant properties of independent generating sets of $A^+(B_n)$, we obtain its intermediate rank.

\begin{lemma}\label{max-igs}
Let $U$ be an independent generating set of $A^+(B_n)$; then
\begin{enumerate}
\item $A^+(B_n)_1 \cap U = \vn$,
\item $|A^+(B_n)_n \cap U| = n(n!)$,
\item $n \le |A^+(B_n)_{n^2 + 1} \cap U| \le 2n - 2$.
\end{enumerate}
Hence, $|U| \le n(n!) + 2n -2$.
\end{lemma}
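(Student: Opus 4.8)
The plan is to split $U$ according to support size --- by Theorem~\ref{t.class.a+bn} every element of $A^+(B_n)$ has support of size $1$, $n$, or $n^2+1$ --- and to bound each of $A^+(B_n)_1\cap U$, $A^+(B_n)_n\cap U$, $A^+(B_n)_{n^2+1}\cap U$ in turn; the displayed inequality is then immediate from $(1)$, $(2)$, $(3)$. For $(1)$, I would assume some singleton support map $f=\sis{(k,l)}{(p,q)}$ lies in $U$ and write it, via Remark~\ref{r.ad.a+bn}(3), as $f=\xi_{(p,q)}+g$ with $g$ of $n$-support. Since $U$ generates, both $\xi_{(p,q)}$ and $g$ are sums of elements of $U$, and by Remark~\ref{r.ksupport-sum} every summand of the first has support of size $\ge n^2+1$ and of the second has support of size $\ge n$; hence $f$ (of support $1$) never occurs as a summand, so $f\in\langle U\setminus\{f\}\rangle$, contradicting independence.

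For $(2)$, the lower bound $\lvert A^+(B_n)_n\cap U\rvert\ge n(n!)$ is Lemma~\ref{gen-set-prop}(2). For the upper bound I would show that each $\mathcal R$-class consisting of $n$-support maps meets $U$ in at most one element; since there are exactly $n(n!)$ such classes (Theorem~\ref{t.gr-rl}(3)), equality follows. If $f,f'\in U$ were distinct $n$-support maps with $f\,\mathcal R\,f'$, then by Theorem~\ref{t.gr-rl}(1) and the form of $n$-support maps one has $f=(p,q;\sigma)$ and $f'=(p,q';\sigma)$ with $q\ne q'$, and a direct computation gives $f'=f+\xi_{(q,q')}$; arguing as in $(1)$, $\xi_{(q,q')}$ is a sum of full-support elements of $U$ not involving $f'$, while $f\in U\setminus\{f'\}$, so $f'\in\langle U\setminus\{f'\}\rangle$, again a contradiction.

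For $(3)$, the lower bound is Lemma~\ref{gen-set-prop}(1). For the upper bound I would set $V=A^+(B_n)_{n^2+1}\cap U=U\cap\mathcal C_{B_n}$. Then $V$ is independent (being a subset of $U$), and since a constant map can only be written as a sum of constant maps (Remark~\ref{r.ksupport-sum}) and $U$ generates $\mathcal C_{B_n}$, the set $V$ generates $\mathcal C_{B_n}$. As $\mathcal C_{B_n}\cong B_n$ via $\xi_c\mapsto c$, $V$ corresponds to an independent generating set of $B_n$, so $\lvert V\rvert\le r_3(B_n)=2n-2$ by the known intermediate rank of the Brandt semigroup (cf.\ \cite{a.hw99,a.hw00}); equivalently, viewing each $\xi_{(i,j)}\in V$ as an arc $i\to j$, generation makes the resulting digraph on $[n]$ strongly connected and independence makes it minimally so, whence it has at most $2(n-1)$ arcs. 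Combining the three bounds gives $\lvert U\rvert=0+n(n!)+\lvert V\rvert\le n(n!)+2n-2$.

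The main obstacle is the upper bound in $(3)$: parts $(1)$ and $(2)$ are essentially bookkeeping with support sizes and Green's relations, but $(3)$ requires first isolating the right substructure ($\mathcal C_{B_n}\cong B_n$) and then an external ingredient --- either the precomputed value $r_3(B_n)=2n-2$ or the extremal bound $2(n-1)$ for minimally strongly connected digraphs (a minimally strongly connected digraph equals a union of an in-branching and an out-branching from a fixed root). One should also check in $(3)$ that $\xi_\vartheta\notin V$ and that no loop $\xi_{(i,i)}$ belongs to $V$, so that the digraph is genuinely loopless and the $2(n-1)$ bound applies cleanly.
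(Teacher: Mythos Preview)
Your proof is correct and follows essentially the same route as the paper: for (1) you decompose a singleton support map as a constant plus an $n$-support map and use support-size considerations to exclude $f$ from the summands; for (2) you invoke Lemma~\ref{gen-set-prop}(2) for the lower bound and use the $\mathcal R$-class count together with $g=f+\xi_{(q,q')}$ for the upper bound; for (3) you use Lemma~\ref{gen-set-prop}(1) and identify $U\cap\mathcal C_{B_n}$ with an independent generating set of $B_n$ to invoke the Howie--Ribeiro bound $r_3(B_n)=2n-2$. The paper's own proof of (3) is the one-line citation of \cite[Theorem~3.1]{a.hw99}, so your explicit isolation of $V$ and the optional digraph argument simply unpack what the paper leaves implicit.
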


\begin{proof}$\;$
\begin{enumerate}
\item Let  $f =\; \sis{(k, l)}{(p, q)} \in A^+(B_n)_1 \cap U$. Since $U$ is a generating set, we have $\xi_{(p, r)} \in \langle U \setminus \{f\}\rangle$ and, for $k \sigma = r$,  $(l, q; \sigma) \in \langle U \setminus \{f\} \rangle$ (cf. Remark \ref{r.ksupport-sum}). Note that $f = \xi_{(p, r)} + (l, q; \sigma)$ so that $f \in \langle U \setminus \{f\} \rangle$; a contradiction to $U$ is an independent set.

\item By Lemma \ref{gen-set-prop}(2), $|A^+(B_n)_n \cap U| \ge n(n!)$. Since $A^+(B_n)_n$ contains only $n(n!)$ $\mathcal{R}$-classes (cf. Theorem \ref{t.gr-rl}(3)), if $U$ contain more than $n(n!)$ elements of $n$-support, then there exist distinct $f,g \in A^+(B_n)_n \cap U$ such that $f \mathcal{R} g$. By Theorem \ref{t.gr-rl}, $f = (k, p; \sigma)$ and $g = (k, p'; \sigma)$, for some $\sigma \in S_n$. Note that, $g = f + \xi_{(p, p')}$, where $\xi_{(p, p')} \in \langle  U \setminus \{g\}\rangle$, so that $ g \in \langle  U \setminus \{g\} \rangle$; a contradiction to independence of $U$. Hence, there are exactly $n(n!)$ elements of $n$-support  in $U$.

\item By Lemma \ref{gen-set-prop}(1), $|A^+(B_n)_{n^2 + 1} \cap U| \ge n$. Further, using \cite[Theorem 3.1]{a.hw99}, one can observe that $U$ contains at most $2n-2$ full support maps.
\end{enumerate}
\end{proof}

\begin{theorem}
For $n \ge 2$, $r_3(A^+(B_n)) = n(n!) + 2n - 2$.
\end{theorem}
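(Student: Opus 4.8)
The plan is to establish the equality $r_3(A^+(B_n)) = n(n!) + 2n - 2$ by a two-sided argument. The upper bound is almost immediate: Lemma \ref{max-igs} already gives that any independent generating set $U$ satisfies $|U| \le n(n!) + (2n-2)$, because it contains no singleton support maps, exactly $n(n!)$ maps of $n$-support, and at most $2n-2$ full support maps. So the entire content of the theorem is the \emph{lower bound}: I must exhibit one specific independent generating set of $A^+(B_n)$ of size exactly $n(n!) + 2n - 2$. Equivalently, I need to enlarge the minimum generating set $\mathcal{S} \cup \mathcal{T}$ (of size $n(n!) + n$ by Theorem \ref{r2-a+bn} and Corollary \ref{sut-min-car}) by $n - 2$ further full-support elements, while keeping the whole set both generating and independent.

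First I would fix a concrete candidate. The set $\mathcal{T}$ contributes the $n(n!)$ $n$-support maps (one per $\mathcal{R}$-class, by the proof of Lemma \ref{gen-set-prop}(2)), and these must all be present in any independent generating set of maximal size, so I keep them. For the full-support part, rather than $\mathcal{S}$ (which has only $n$ elements), I would take a larger set of $2n-2$ constant maps $\mathcal{S}'$ whose generated subsemigroup is still all of $\mathcal{C}_{B_n}$ — e.g. the $2n-2$ maps $\{\xi_{(i,i+1)} : i \in [n-1]\} \cup \{\xi_{(i+1,i)} : i \in [n-1]\}$, or another explicitly chosen family — and then set $U = \mathcal{S}' \cup \mathcal{T}$. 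That $U$ generates $A^+(B_n)$ follows from Lemma \ref{gen-a+bn} once I check $\langle \mathcal{S}' \rangle = \mathcal{C}_{B_n}$, which is a short computation in the style of Lemma \ref{gen-cm-a+bn}. The real work is verifying \emph{independence}.

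For independence I would argue element by element, splitting into the two cases according to whether the removed element lies in $\mathcal{T}$ or in $\mathcal{S}'$. If $g \in \mathcal{T}$ is removed, then $g$ is the unique element of $U$ in its $\mathcal{R}$-class among $n$-support maps, and by Lemma \ref{l.fs-sum}(2) any expression for $g$ as a sum of other elements would need a first summand in $R_g$; since $U \setminus \{g\}$ contains no element of $R_g$ of $n$-support and no constant map can lie in $R_g$ (a constant map has full support), $g \notin \langle U \setminus \{g\}\rangle$. If $h = \xi_{(p,q)} \in \mathcal{S}'$ is removed, I use Lemma \ref{l.fs-sum}(1): any representation of $h$ as a sum of full-support elements (it cannot involve $n$-support or singleton-support summands, again by support-size reasons) must have first summand in $R_h$ and last summand in $L_h$; here I must choose $\mathcal{S}'$ carefully so that for each $h \in \mathcal{S}'$ either $R_h$ or $L_h$ meets $\mathcal{S}' \setminus \{h\}$ trivially, or more precisely so that no product of the remaining constants reproduces $h$ — and this is exactly where $\mathcal{S}'$ having more structure than $\mathcal{S}$ matters, and where I expect to lean on \cite[Theorem 3.1]{a.hw99} about independent generating sets of (completely $0$-simple, Brandt-like) semigroups to guarantee the $2n-2$ constants can be chosen independent in $\mathcal{C}_{B_n}$ itself.

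The main obstacle is therefore the independence check for the constant-map part: I need to produce $2n-2$ constant maps that (a) still generate $\mathcal{C}_{B_n}$ together with $\mathcal{T}$, (b) are independent among themselves even within $\mathcal{C}_{B_n}$, and (c) remain independent once the $n(n!)$ $n$-support maps of $\mathcal{T}$ are thrown in — the last point needing the observation that an $n$-support or singleton-support map can never appear as a summand in a sum equal to a full-support map, so the $\mathcal{T}$-part is "invisible" to the constant-map independence. I expect the cleanest route is to transport the known maximal independent generating set of the Brandt semigroup $B_n$ from \cite{a.hw99} into $\mathcal{C}_{B_n}$ (the constant maps form a copy of $B_n$ under $+$, with $\xi_\vt$ the zero), take its $2n-2$ elements, and then verify directly that adjoining $\mathcal{T}$ preserves both properties. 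Once that candidate $U$ is in hand, combining $\langle U\rangle = A^+(B_n)$, independence of $U$, and $|U| = n(n!) + 2n - 2$ with the upper bound from Lemma \ref{max-igs} yields $r_3(A^+(B_n)) = n(n!) + 2n - 2$.
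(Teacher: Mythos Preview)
Your proposal is correct and follows essentially the same approach as the paper: both bound $r_3$ above via Lemma \ref{max-igs} and below by exhibiting an explicit independent generating set $\mathcal{S}' \cup \mathcal{T}$ with $|\mathcal{S}'| = 2n-2$, checking generation through Lemma \ref{gen-a+bn} and independence through Lemma \ref{l.fs-sum}. The paper chooses the ``star'' set $\mathcal{S}' = \{\xi_{(1,i)} : 2 \le i \le n\} \cup \{\xi_{(j,1)} : 2 \le j \le n\}$, for which your stated criterion (each $h \in \mathcal{S}'$ is the unique element of $\mathcal{S}'$ in $R_h$ or in $L_h$) holds on the nose, making the constant-map independence a one-line consequence of Lemma \ref{l.fs-sum}(1). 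One small caveat: your first suggested candidate $\{\xi_{(i,i+1)},\xi_{(i+1,i)} : i \in [n-1]\}$ is in fact independent, but it does \emph{not} satisfy the $R_h$/$L_h$ criterion you wrote down (e.g.\ $\xi_{(2,3)}$ shares its $\mathcal{R}$-class with $\xi_{(2,1)}$ and its $\mathcal{L}$-class with $\xi_{(4,3)}$ when $n\ge 4$), so you would need the graph-connectivity argument or your fallback of importing an independent generating set of $B_n$ from \cite{a.hw99}; the paper's star set avoids this detour.
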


\begin{proof}
First note that the set \[\mathcal{S'} = \{\xi_{(1, i)} \mid 2 \le i \le n \} \cup \{\xi_{(j, 1)} \mid 2 \le j \le n \}\]  generates all constant maps in $A^+(B_n)$. For instance, $\xi_\vt = \xi_{(1, 2)} + \xi_{(1, 2)}$ and $\xi_{(1, 1)} = \xi_{(1, 2)} + \xi_{(2, 1)}$. Now, for $p, q \in [n]$, clearly $\xi_{(1, p)}$ and $\xi_{(q, 1)} \in \langle \mathcal{S'} \rangle$. Further, since $\xi_{(p, q)} = \xi_{(p, 1)} + \xi_{(1, q)}$, we have $\xi_{(p, q)} \in \langle \mathcal{S'} \rangle$.

Hence, by Lemma \ref{gen-a+bn}, the set $V = \mathcal{S'} \cup \mathcal{T}$ is a generating set of $A^+(B_n)$. We show that $V$ is also an independent set. Since $|V| = n(n!) + 2n - 2$, by Lemma \ref{max-igs}, the theorem follows.

\textit{$V$ is an independent set}: For $f \in V$, suppose $f = \displaystyle\sum_{i =1}^k f_i$ with $f_i \in V \setminus \{f\}$. Then, by Lemma \ref{l.fs-sum}, $f_1 \in R_f$. If $f \in \mathcal{S'} \cup \mathcal{T}$, in the following, we observe that $f = f_1$; which is a contradiction so that $f \notin \langle V \setminus \{f\} \rangle$.

If $f = \xi_{(q, 1)} \in \mathcal{S'}$ (for some $2 \le q \le n$), then $f_1 = \xi_{(q, l)}$, for some $l \in [n]$ (cf. Theorem \ref{t.gr-rl}(1)).  Hence, $f_1 =  f$. For $2 \le p \le n$,  if $f = \xi_{(1, p)}$, the argument is similar.

If $f = (k, p; \sigma) \in \mathcal{T}$ (for some $k, p \in [n], \sigma \in S_n$), again by Theorem \ref{t.gr-rl}(1), $f_1 = (k, s; \sigma)$ for some $s \in [n]$.  Consequently,  $f = f_1$ (cf. construction of $\mathcal{T}$).
\end{proof}

\section{Upper rank}

It is always difficult to identify the upper rank of a semigroup and we observe that $A^+(B_n)$ is also not an exception. In order to investigate the upper rank $r_4(A^+(B_n))$, in this section, first we obtain a lower bound for the upper rank and eventually we prove that this lower bound is indeed the $r_4(A^+(B_n))$, for $n \ge 6$. We also report an independent set of 14 elements in $A^+(B_2)$.

\begin{theorem}\label{lb-r4a+bn}
For $n \ge 2$, $ I = A^+(B_n)_n \cup \{\xi_{(i, i)} : i \in [n]\}$ is an independent set in $A^+(B_n)$. Hence, by Theorem \ref{t.class.a+bn}, $r_4(A^+(B_n)) \ge (n!)n^2 + n$.
\end{theorem}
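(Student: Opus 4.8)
The plan is to verify directly that for every $f \in I$ one has $f \notin \langle I \setminus \{f\} \rangle$, so that $I$ is independent, and then count $|I|$. Two preliminary observations drive the argument. First, every element of $I$ has support of size $n$ (the $n$-support maps $(p,q;\sigma)$) or $n^{2}+1$ (the idempotent constants $\xi_{(i,i)}$), and the only full-support members of $I$ are the $n$ maps $\xi_{(i,i)}$. Second, a sum of idempotent constants $\xi_{(b_1,b_1)} + \cdots + \xi_{(b_m,b_m)}$ equals $\xi_{(b_1,b_1)}$ when all the $b_j$ agree and equals $\xi_\vt$ otherwise; in particular it never equals $\xi_{(a,a)}$ when $a \neq b_j$ for all $j$.

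Now fix $f \in I$ and suppose $f = \sum_{i=1}^{k} f_i$ with $f_i \in I \setminus \{f\}$. If $k = 1$ then $f = f_1 \neq f$, absurd, so assume $k \geq 2$. If $f = \xi_{(a,a)}$ has full support, then by Remark \ref{r.ksupport-sum} each $f_i$ has support at least $n^{2}+1$, hence full support, hence $f_i = \xi_{(b_i,b_i)}$ with $b_i \neq a$; by the second observation $\sum_i f_i$ cannot equal $\xi_{(a,a)}$, a contradiction.

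If instead $f = (a,q;\sigma) \in A^+(B_n)_n$, I would invoke Lemma \ref{l.fs-sum}(2) to get $f_1 \in R_f$, and (inspecting the proof of that lemma, which uses the fact from \cite{a.jk13} that a sum of two $n$-support maps has support of size at most $1$) the summands $f_2,\dots,f_k$ all have full support, so each is an idempotent constant in $I$. By Theorem \ref{t.gr-rl}(1), $\mathrm{supp}(f_1) = \mathrm{supp}(f)$ and $f_1$ agrees with $f$ in the first coordinate on this support, whence $f_1 = (a,q';\sigma)$ for some $q' \in [n]$. Writing $g := \sum_{i=2}^{k} f_i$, we have $g = \xi_{(b,b)}$ for some $b$ (it cannot be $\xi_\vt$, else $f = f_1 + \xi_\vt = \xi_\vt$), and then $f = (a,q';\sigma) + \xi_{(b,b)}$ forces $q' = b = q$; thus $f_1 = (a,q;\sigma) = f$, contradicting $f_1 \neq f$.

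This establishes that $I$ is independent. Finally, by Theorem \ref{t.class.a+bn} we have $|A^+(B_n)_n| = (n!)n^{2}$, and the $n$ maps $\xi_{(i,i)}$ are pairwise distinct and distinct from every $n$-support map, so $|I| = (n!)n^{2} + n$, giving $r_4(A^+(B_n)) \geq (n!)n^{2} + n$. I expect the only genuine obstacle to lie in the $n$-support case: the crux is to rule out more than one $n$-support summand and thereby confine the remaining summands to the short list of full-support elements of $I$, which is exactly where the structural input from the proof of Lemma \ref{l.fs-sum}(2) is needed.
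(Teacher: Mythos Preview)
Your proof is correct and follows essentially the same route as the paper's: both split into the cases $f=\xi_{(a,a)}$ and $f\in A^+(B_n)_n$, and in the latter case both invoke Lemma~\ref{l.fs-sum}(2) (including the observation from its proof that $f_2,\dots,f_k$ are full-support) to force $f_1=f$. The only cosmetic difference is that the paper phrases the contradiction as ``$\xi_{(s,p)}\notin\langle I\rangle$ when $s\neq p$'', whereas you argue directly that a sum of the idempotent constants $\xi_{(b_i,b_i)}$ is either $\xi_{(b,b)}$ or $\xi_\vt$; these are the same point.
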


\begin{proof}
For $f \in  I$, suppose  $f = \displaystyle{\sum_{j = 1}^{k}f_j}$, for $f_j \in  I$. We prove that $f_1 = f$ so that $f \notin \langle  I \setminus \{f\}\rangle$.
Let $f = \xi_{(i, i)}$;  then clearly $f_j = f$ for all $j$. We may now suppose $f \in A^+(B_n)_n$ and $(k, p; \sigma)$ be the representation of $f$. By Lemma \ref{l.fs-sum}(2), $f_1 \in R_f$ and $\displaystyle{\sum_{i = 2}^{k}f_i} = \xi_{(s, p)}$ for some $s \in [n]$. Note that $f_1 = (k, s; \sigma)$  (cf. Theorem \ref{t.gr-rl}).
If $s \ne p$, then $\xi_{(s, p)} \notin \langle I\rangle$; a contradiction. Hence, $s = p$ so that $f_1 = f$.
\end{proof}

\begin{corollary}\label{r.mi-ns}
$A^+(B_n)_n$ is an independent subset of size $(n!)n^2$ in $A^+(B_n)$.
\end{corollary}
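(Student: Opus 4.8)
The statement to prove is Corollary \ref{r.mi-ns}: $A^+(B_n)_n$ is an independent subset of size $(n!)n^2$ in $A^+(B_n)$.

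The plan is as follows. The size claim is immediate from Theorem \ref{t.class.a+bn}, which says the $n$-support elements of $A^+(B_n)$ are exactly the maps $(p, q; \sigma)$ with $p, q \in [n]$ and $\sigma \in S_n$, of which there are $(n!)n^2$. So the real content is independence, and that has essentially just been established inside the proof of Theorem \ref{lb-r4a+bn}.

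The approach is to extract the relevant part of that argument. Let $f \in A^+(B_n)_n$ and suppose $f = \sum_{j=1}^k f_j$ with each $f_j \in A^+(B_n)_n \setminus \{f\}$; I want a contradiction, which shows $f \notin \langle A^+(B_n)_n \setminus \{f\}\rangle$. Write $f = (k, p; \sigma)$. By Lemma \ref{l.fs-sum}(2), $f_1 \in R_f$ and $\sum_{i=2}^k f_i$ is a constant map $\xi_{(s, p)}$ for some $s \in [n]$, and $f_1 = (k, s; \sigma)$ by Theorem \ref{t.gr-rl}. But then $\sum_{i=2}^k f_i = \xi_{(s,p)}$ is a constant map expressed as a sum of $n$-support maps; by Remark \ref{r.ksupport-sum} (or \cite[Proposition 2.9]{a.jk13} as used in Lemma \ref{l.fs-sum}), a sum of two $n$-support maps has support of size at most $1$, hence is never an $n$-support map for $n \ge 2$ — so in fact, one cannot even have $k \ge 2$ here. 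More directly: since $A^+(B_n)_n$ contains no constant maps, and a sum starting with two $n$-support summands immediately collapses to support size $\le 1$, we must have $k = 1$, whence $f = f_1$, contradicting $f_1 \ne f$.

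There is no serious obstacle here; the corollary is a routine consequence of Theorem \ref{lb-r4a+bn} (just drop the idempotent constant maps $\xi_{(i,i)}$ from the independent set $I$, and note that a subset of an independent set is independent). The only point requiring a line of care is to confirm that the ``$s = p$'' step of the earlier proof can be sharpened: in the present setting the offending partial sum $\sum_{i=2}^k f_i$ must be an $n$-support map or empty, and it cannot be an $n$-support map, so the sum is trivial. I would phrase the proof simply as: the size is clear from Theorem \ref{t.class.a+bn}; for independence, observe $A^+(B_n)_n \subseteq I$ and apply Theorem \ref{lb-r4a+bn} together with the fact that any subset of an independent set is independent.
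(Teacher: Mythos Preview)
Your proposal is correct and matches the paper's approach: the corollary is stated without proof in the paper precisely because $A^+(B_n)_n \subseteq I$ and any subset of an independent set is independent, with the size coming from Theorem~\ref{t.class.a+bn}. Your longer direct argument is also fine (the ``more directly'' version is the clean one), though note that Lemma~\ref{l.fs-sum}(2) as stated only gives $f_1 \in R_f$; the claim that $\sum_{i\ge 2} f_i$ is constant comes from its proof, which in your setting (all $f_j$ of $n$-support) actually forces $k=1$ immediately, exactly as you then observe.
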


\begin{lemma}\label{l.ind-ss}
Let $Q$ be an independent subset of $B_n$ and
\[Q' = \left.\left \{ \sis{(k, l)}{\alpha} \right| k, l \in [n] \mbox{ and
} \alpha \in Q \right \};\] then $Q'$ is an independent subset of $A^+(B_n)$.
\end{lemma}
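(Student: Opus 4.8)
The plan is to show that no element of $Q'$ lies in the subsemigroup generated by the others, i.e.\ for each $\sis{(k,l)}{\alpha} \in Q'$ we must refute any equation
\[ \sis{(k,l)}{\alpha} = \sum_{i=1}^{m} g_i, \qquad g_i \in Q' \setminus \left\{\sis{(k,l)}{\alpha}\right\}. \]
Since every $g_i$ is a singleton support map, it has support $1$, so by Remark~\ref{r.ksupport-sum} each partial sum has support at most $1$. First I would observe that the left side $\sis{(k,l)}{\alpha}$ is nonzero (it sends $(k,l)$ to $\alpha \in Q \subseteq B_n$; note $Q$ independent forces $\vt \notin Q$ unless $|Q|=1$, but in any case $\alpha \ne \vt$ here since $\alpha f \ne \vt$ would be needed — more carefully, if $\alpha = \vt$ then $\sis{(k,l)}{\alpha} = \xi_\vt$, which I will handle separately or exclude by noting such a map is not really a singleton support map). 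So we may assume $\alpha \ne \vt$, and then the sum $\sum g_i$ must be nonzero.

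Next I would peel off the first summand. Because $g_1 = \sis{(k_1,l_1)}{\alpha_1}$ has support $\{(k_1,l_1)\}$, and adding further maps can only shrink the support, the total sum is nonzero only if its support equals $\{(k_1,l_1)\}$ and this equals the support $\{(k,l)\}$ of the left side; hence $(k_1,l_1) = (k,l)$. Now I would track the value at $(k,l)$: we have $(k,l)\bigl(\sum_{i=1}^m g_i\bigr) = \bigl((k,l)g_1\bigr) g_2 \cdots g_m$... — actually in $(M(B_n),+)$ this is $(k,l)g_1 + (k,l)g_2 + \cdots$, so more directly, $(k,l)\sum g_i = \alpha_1 + (k,l)g_2 + \cdots + (k,l)g_m = \alpha$ where $\alpha_1 = (k,l)g_1$. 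But since each $g_i$ for $i\ge 2$ also has singleton support at some point, and the sum must stay supported at $(k,l)$, I would argue inductively that the support of $g_1 + \cdots + g_j$ is $\{(k,l)\}$ for every $j$, forcing $(k,l) g_j \ne \vt$ for every $j$, hence every $g_j$ has support $\{(k,l)\}$. Writing $(k,l)g_j = \alpha_j \in Q$, the equation becomes $\alpha_1 + \alpha_2 + \cdots + \alpha_m = \alpha$ in $B_n$ with all $\alpha_j \in Q$.

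Finally I would use the independence of $Q$ in $B_n$: the relation $\alpha_1 + \cdots + \alpha_m = \alpha$ with $\alpha, \alpha_j \in Q$ must force $m = 1$ and $\alpha_1 = \alpha$ — if $m \ge 2$ then $\alpha = \alpha_1 + (\alpha_2 + \cdots + \alpha_m) \in \langle Q \setminus \{\alpha\}\rangle$ unless all the $\alpha_j$ equal $\alpha$ and actually even $\alpha = \alpha + \cdots + \alpha$; here I need that $\alpha + \alpha \ne \alpha$ in general or handle the idempotent case, but in $B_n$ the independence of $Q$ already rules out $\alpha$ being expressible as a sum of other elements of $Q$, and a sum $\alpha_1 + \cdots + \alpha_m$ with not all equal to $\alpha$ witnesses $\alpha \in \langle Q \setminus \{\alpha\}\rangle$. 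So $m = 1$, $\alpha_1 = \alpha$, and $g_1 = \sis{(k,l)}{\alpha}$, contradicting $g_1 \ne \sis{(k,l)}{\alpha}$. The main obstacle is the bookkeeping in the middle step — showing that every summand $g_j$ must in fact have its support exactly at $(k,l)$ (not just $g_1$), which requires a careful induction on partial sums using Remark~\ref{r.ksupport-sum}; once that is established the reduction to independence in $B_n$ is immediate.
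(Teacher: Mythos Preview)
Your approach is essentially the paper's, and the middle bookkeeping is easier than you fear: from $(k,l)\bigl(\sum_i g_i\bigr) = \sum_i (k,l)g_i = \alpha \ne \vt$ in $B_n$, the fact that $\vt$ is a two-sided absorbing element immediately forces every term $(k,l)g_i \ne \vt$, so each $g_i$ has support exactly $\{(k,l)\}$. No induction on partial sums is needed, and this is what the paper dismisses with ``clearly $k_j = k$, $l_j = l$ for all $j$''.

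The genuine slip is in your final step. You try to force $m = 1$, and in doing so you assert that ``a sum $\alpha_1 + \cdots + \alpha_m$ with not all equal to $\alpha$ witnesses $\alpha \in \langle Q \setminus \{\alpha\}\rangle$''. That is false: if, say, $\alpha_1 = \alpha$ and $\alpha_2 \ne \alpha$, the sum $\alpha_1 + \alpha_2$ involves $\alpha$ itself and proves nothing about $\langle Q \setminus \{\alpha\}\rangle$. You do not need $m = 1$ at all. The correct (and shorter) finish, which is what the paper does, is: if \emph{every} $\alpha_j \ne \alpha$, then $\alpha = \alpha_1 + \cdots + \alpha_m \in \langle Q \setminus \{\alpha\}\rangle$, contradicting independence of $Q$; hence some $\alpha_j = \alpha$, so $g_j = \sis{(k,l)}{\alpha}$, contradicting $g_j \in Q' \setminus \{\sis{(k,l)}{\alpha}\}$.
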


\begin{proof}
For $\sis{(k, l)}{\alpha} \in Q'$, $k_j, l_j \in [n]$ and $\alpha_j \in Q$ suppose  \[\sis{(k, l)}{\alpha}  = \sum_{j = 1}^{k}\; \sis{(k_j, l_j)}{\alpha_j}.\] Clearly $k_j = k$, $l_j = l$ for all $j$, and $\alpha = \displaystyle\sum_{j=1}^k\alpha_j$. Since $Q$ is independent, we have $\alpha = \alpha_i$ for some $i$ ($1 \le i \le k$). Consequently,  $\sis{(k, l)}{\alpha} \notin \langle Q'
\setminus \{\sis{(k, l)}{\alpha}\}\rangle$ so that $Q'$ is an independent set.
\end{proof}

\begin{remark}\label{r.mi-fs}
Since $B_n$ is isomorphic to the semigroup $\mathcal{C}_{B_n}$, by \cite[Theorem 3.3]{a.hw99}, we have
$r_4(\mathcal{C}_{B_n}) = \left\lfloor n^2 /4 \right\rfloor \ + n$.
\end{remark}

In view of Remark \ref{r.mi-fs}, we have the following corollary of Lemma \ref{l.ind-ss}.

\begin{corollary}\label{r.mi-ss}
For $n \ge 2$,  the maximum size of an independent subset in $A^+(B_n)_1$ is $n^2(\left\lfloor n^2 /4 \right\rfloor \ + n )$.
\end{corollary}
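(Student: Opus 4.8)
Looking at Corollary \ref{r.mi-ss}, I need to prove that the maximum size of an independent subset of $A^+(B_n)_1$ equals $n^2(\lfloor n^2/4\rfloor + n)$.

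\textbf{Proof proposal.}

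The plan is to establish matching lower and upper bounds. For the lower bound, I would take a maximum independent subset $Q$ of $B_n$, which by Remark~\ref{r.mi-fs} (via the isomorphism $B_n \cong \mathcal{C}_{B_n}$ and \cite[Theorem 3.3]{a.hw99}) has size $\lfloor n^2/4\rfloor + n$. Applying Lemma~\ref{l.ind-ss} to this $Q$ yields an independent subset $Q'$ of $A^+(B_n)_1$; since the map $(k,l,\alpha) \mapsto \sis{(k,l)}{\alpha}$ is injective and there are $n^2$ choices of the pair $(k,l)$ and $|Q| = \lfloor n^2/4\rfloor + n$ choices of $\alpha$, we get $|Q'| = n^2(\lfloor n^2/4\rfloor + n)$. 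This gives the lower bound.

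For the upper bound, let $U$ be any independent subset of $A^+(B_n)_1$. The key observation is that every singleton support map $\sis{(k,l)}{\alpha}$ has its unique nonzero input determined by the pair $(k,l)$, and a sum $\sum_j \sis{(k_j,l_j)}{\alpha_j}$ has singleton support only if all the pairs $(k_j, l_j)$ agree (otherwise consecutive summands kill each other and the support shrinks to size at most $1$, i.e. becomes constant — and in fact the support set $\{(k,l)\}$ forces every summand to have that same support). Thus for each fixed pair $(k,l) \in [n]\times[n]$, the set $U_{(k,l)} = \{\sis{(k,l)}{\alpha} \in U\}$ behaves independently of the other blocks: an element of $U_{(k,l)}$ can only be generated using other elements of $U_{(k,l)}$. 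Consequently the "image" set $Q_{(k,l)} = \{\alpha : \sis{(k,l)}{\alpha} \in U\} \subseteq B_n$ must be an independent subset of $B_n$ (since $\alpha \mapsto \sis{(k,l)}{\alpha}$ is a semigroup embedding on the relevant subsemigroup, as is implicit in the proof of Lemma~\ref{l.ind-ss}), so $|U_{(k,l)}| = |Q_{(k,l)}| \le r_4(B_n) = \lfloor n^2/4\rfloor + n$. Summing over the $n^2$ pairs $(k,l)$ gives $|U| \le n^2(\lfloor n^2/4\rfloor + n)$.

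The main obstacle is making the block decomposition argument fully rigorous: I must carefully verify that when $\sis{(k,l)}{\alpha} = \sum_j f_j$ with all $f_j \in U \subseteq A^+(B_n)_1$, every $f_j$ must in fact equal some $\sis{(k,l)}{\alpha_j}$ with the \emph{same} pair $(k,l)$ — this is exactly the computation carried out at the start of the proof of Lemma~\ref{l.ind-ss}, so it can be cited. Given that, the independence of $U$ restricted to the block $U_{(k,l)}$ transfers to independence of $Q_{(k,l)}$ in $B_n$, and the rest is counting. I would present the upper bound essentially as a converse to Lemma~\ref{l.ind-ss}, noting that the construction there is optimal block by block.
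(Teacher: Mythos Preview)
Your proposal is correct and follows the paper's intended approach. The paper states Corollary~\ref{r.mi-ss} without proof, merely citing Lemma~\ref{l.ind-ss} and Remark~\ref{r.mi-fs}; you spell out both directions --- the lower bound via Lemma~\ref{l.ind-ss} applied to a maximum independent $Q \subseteq B_n$, and the upper bound via the block decomposition $U = \bigcup_{(k,l)} U_{(k,l)}$ together with the observation (already contained in the proof of Lemma~\ref{l.ind-ss}) that any sum of singleton-support maps equal to $\sis{(k,l)}{\alpha}$ must consist entirely of maps with the same support point $(k,l)$ --- which the paper leaves implicit.
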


\begin{remark}\label{r.ins-fs}
For $f_i \in A^+(B_n)$, if $\displaystyle\sum_{i=1}^rf_i + \xi_{(p, p)} + \sum_{i = r+1}^s f_i$ is nonzero, then the sum equals $\displaystyle\sum_{i=1}^sf_i$.
\end{remark}

For $n =2$, we provide a better lower bound in the following theorem.

\begin{theorem}\label{lb-r4a+b2}
$r_4(A^+(B_2)) \ge 14$.
\end{theorem}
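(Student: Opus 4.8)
The plan is to exhibit an explicit independent subset $U \subseteq A^+(B_2)$ with $|U| = 14$; since $A^+(B_2)$ is finite, $r_4(A^+(B_2)) \ge |U| = 14$ follows at once. Concretely I would take the twelve singleton support maps produced by Lemma~\ref{l.ind-ss} from the independent subset $Q = \{(1,1),(1,2),(2,2)\}$ of $B_2$, augmented by two constant maps:
\[ U = \left\{ \sis{(k,l)}{\alpha} : k,l \in [2],\ \alpha \in \{(1,1),(1,2),(2,2)\} \right\} \;\cup\; \{\xi_{(1,1)},\xi_{(2,2)}\}, \]
so that $|U| = 2^2\cdot 3 + 2 = 14$. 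Here $\{(1,1),(1,2),(2,2)\}$ and $\{(1,1),(2,2)\}$ are independent in $B_2$, which disposes of two of the cases below with no work.

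The verification is driven by the elementary fact that ${\rm supp}(f_1 + \cdots + f_m) \subseteq \bigcap_i {\rm supp}(f_i)$ in $(A^+(B_n),+)$ --- immediate because $\vt$ is absorbing in $B_n$ --- which sharpens Remark~\ref{r.ksupport-sum}. Suppose $h \in U$ and $h = \sum_i h_i$ with every $h_i \in U \setminus\{h\}$. If $h = \xi_{(j,j)}$, then each $h_i$ has full support, hence is one of $\xi_{(1,1)},\xi_{(2,2)}$; since $\{\xi_{(1,1)},\xi_{(2,2)}\}$ is independent in $\mathcal{C}_{B_2}$ this is impossible. If $h = \sis{(k,l)}{\beta}$ with $\beta \in Q$, then $(k,l) \in {\rm supp}(h) \subseteq {\rm supp}(h_i)$ for each $i$, so $h_i$ is either $\xi_{(1,1)}$, $\xi_{(2,2)}$, or a singleton support map $\sis{(k,l)}{\alpha}$ with $\alpha \in Q\setminus\{\beta\}$. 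A sum of constant maps is a constant map, so at least one $h_i$ is such a singleton; at every $\gamma \ne (k,l)$ that summand already contributes $\vt$, which then absorbs the whole sum, so $\gamma h = \vt$ there automatically. The only remaining constraint is at $(k,l)$: evaluating, $\beta = (k,l)h = \sum_i (k,l)h_i$ must be a nonzero $B_2$-sum of elements of $(Q\setminus\{\beta\}) \cup \{(1,1),(2,2)\}$ that uses at least one term from $Q\setminus\{\beta\}$.

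It then remains to check, for each $\beta \in Q$, that no such $B_2$-sum exists. For $\beta = (1,2)$ all available terms lie in $\{(1,1),(2,2)\}$ and $\langle (1,1),(2,2)\rangle_{B_2} = \{(1,1),(2,2),\vt\}$ does not contain $(1,2)$. For $\beta = (1,1)$ (and symmetrically $\beta = (2,2)$) the available terms are $(1,1),(1,2),(2,2)$, and the only one with second coordinate $1$ is $(1,1)$; reading a nonzero $B_2$-sum $v_1 + \cdots + v_m = (1,1)$ from the right and chaining the coordinates forces $v_m = v_{m-1} = \cdots = v_1 = (1,1)$, which contradicts the requirement that some $v_i$ belong to $\{(1,2),(2,2)\}$. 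Hence no element of $U$ lies in the subsemigroup generated by the others, $U$ is independent, and $r_4(A^+(B_2)) \ge 14$.

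The step I expect to be the real obstacle is this last one for $\beta \in \{(1,1),(2,2)\}$: the crude bound ``$\beta$ lies in the subsemigroup of $B_2$ generated by the available terms'' is satisfied, and ruling the decomposition out genuinely requires the two finer observations --- that a constant summand alone cannot produce a singleton support map, and the coordinate-chaining in $B_2$. I would also record that adjoining to $U$ any off-diagonal constant, a third constant, or a singleton support map with image $(2,1)$ recreates some member of $U$ as a sum of two others; this is what caps the construction at $14$ and is in line with $r_4(A^+(B_2))$ itself being left open.
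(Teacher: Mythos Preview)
Your proof is correct and follows essentially the same approach as the paper: you exhibit the very same $14$-element set (the twelve singleton support maps coming from the independent subset $Q=\{(1,1),(1,2),(2,2)\}$ of $B_2$ via Lemma~\ref{l.ind-ss}, together with $\xi_{(1,1)}$ and $\xi_{(2,2)}$) and verify its independence. The only cosmetic difference is that the paper disposes of the idempotent constants by invoking Remark~\ref{r.ins-fs} and then appeals directly to the proof of Lemma~\ref{l.ind-ss}, whereas you keep the constants in the sum and carry out the equivalent $B_2$ coordinate-chaining check by hand; the content is the same.
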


\begin{proof}

We claim that the 14-element set \[P = \left.\left \{ \sis{(k, l)}{\alpha} \right| k, l \in [2] \mbox{ and } \alpha \in Q \right \} \cup \{\xi_{(1, 1)}, \xi_{(2, 2)}\},\] where $Q = \{(1, 1), (1, 2), (2, 2)\}$, is an independent subset of the semigroup $A^+(B_2)$.

For $f \in P$, suppose  $f = \displaystyle{\sum_{j = 1}^{k}f_j}$, for $f_j \in P$. If $f = \xi_{(i, i)}$, then $f_j = f$ for all $j$ so that $f \notin \langle P \setminus \{f\}\rangle$. Otherwise, $f$ = $\sis{(k, l)}{\alpha}$ for $\alpha \in Q$. By Remark \ref{r.ins-fs}, the sum for $f$ can be reduced to a sum with only the singleton support elements of $P$. Hence, from the proof of Lemma \ref{l.ind-ss}, $P$ is independent.
\end{proof}

\begin{theorem}\label{r4-a+bn}
For $n \ge 6$, $r_4(A^+(B_n)) = (n!)n^2 + n$.
\end{theorem}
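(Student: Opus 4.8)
The plan is to prove the matching upper bound $r_4(A^+(B_n)) \le (n!)n^2 + n$ for $n \ge 6$, so that together with Theorem~\ref{lb-r4a+bn} the equality follows. Let $U$ be any independent subset of $A^+(B_n)$; I want to show $|U| \le (n!)n^2 + n$. Split $U$ according to support size using Theorem~\ref{t.class.a+bn}: write $U = U_1 \cup U_n \cup U_{n^2+1}$, where $U_1 = U \cap A^+(B_n)_1$, $U_n = U \cap A^+(B_n)_n$, and $U_{n^2+1} = U \cap \mathcal{C}_{B_n}$. By Corollary~\ref{r.mi-ns} we always have $|U_n| \le (n!)n^2$, so it suffices to show that $|U_1| + |U_{n^2+1}| \le n$ whenever $U$ is independent. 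This is the crux, and it is where the hypothesis $n \ge 6$ must enter.

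The first sub-step is to bound $|U_{n^2+1}|$: since $\mathcal{C}_{B_n} \cong B_n$ and $U_{n^2+1}$ is independent in $\mathcal{C}_{B_n}$ (independence of a subset of $U$ whose elements lie in a subsemigroup), Remark~\ref{r.mi-fs} gives $|U_{n^2+1}| \le \lfloor n^2/4\rfloor + n$; but I actually expect the independence of the \emph{whole} set $U$ to force a much stronger restriction, because singleton-support maps interact with constant maps via sums of the form $\xi_{(p,q)} + g$ (Remark~\ref{r.ad.a+bn}(3)). The key structural fact I would try to establish: if $f = \,\sis{(k,l)}{(p,q)} \in U_1$, then writing $f$ as a sum forces, via Lemma~\ref{l.fs-sum}-type reasoning on supports, the existence of a constant map $\xi_{(p,r)}$ and an $n$-support map in $\langle U\setminus\{f\}\rangle$ with prescribed $\mathcal{R}$- and $\mathcal{L}$-data; conversely each constant map $\xi_{(p,q)}$ with $p \ne q$ can be rebuilt from singleton maps plus the diagonal constants. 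I would make these into a counting argument: the number of ``independent directions'' available among $U_1 \cup U_{n^2+1}$ collapses, for $n$ large, to exactly $n$ — one for each idempotent constant $\xi_{(i,i)}$ — because any non-diagonal constant and any singleton map becomes expressible once enough generators of the appropriate $\mathcal{R}$/$\mathcal{L}$-classes are present, and $A^+(B_n)_n$ (which must lie almost entirely in $U$ by Lemma~\ref{max-igs}(2) when $U$ is additionally generating, but here only independence is assumed) supplies those via the $n$-support part.

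The honest difficulty is that for a merely \emph{independent} (not generating) set, the argument of Lemma~\ref{max-igs} does not apply directly, so I cannot assume $U_n$ is large. The right approach is therefore a trade-off/exchange argument: show that for each ``extra'' element beyond the bound $(n!)n^2$ that $U_n$ fails to contribute, $U_1 \cup U_{n^2+1}$ can contribute a controlled surplus, but that the total cannot exceed $(n!)n^2 + n$ once $n \ge 6$ because the surplus from full- and singleton-support maps is at most $2n-2$ (as in Lemma~\ref{max-igs}(3) and \cite[Theorem~3.1]{a.hw99}), while each unit of that surplus costs at least one $\mathcal{R}$-class worth of $n$-support maps that can no longer be independent. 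Concretely, I would partition $A^+(B_n)_n$ into its $(n!)n$ $\mathcal{R}$-classes (Theorem~\ref{t.gr-rl}(3)), observe each contains $n$ elements, and argue that the presence of a full-support or singleton element in $U$ that ``depends on'' an $\mathcal{R}$-class $R$ of $n$-support maps removes at least one element's worth of independence from $R$; a careful count then yields $|U| \le (n!)n^2 - (|U_1| + |U_{n^2+1}| - n)\cdot c + (|U_1| + |U_{n^2+1}|)$ for a constant $c \ge 1$, which is $\le (n!)n^2 + n$ provided $|U_1| + |U_{n^2+1}| \le$ something linear in $n$ that is dominated when $n \ge 6$. I expect the main obstacle to be making this exchange bookkeeping rigorous — in particular ruling out that many singleton-support maps with pairwise-distinct $(k,l)$-parameters could coexist independently with a full $A^+(B_n)_n$; the numeric threshold $n \ge 6$ should emerge precisely from comparing $n^2(\lfloor n^2/4\rfloor + n)$ (Corollary~\ref{r.mi-ss}, the size of the largest independent set inside $A^+(B_n)_1$ alone) against $(n!)n^2 + n$ and checking that once $n \ge 6$ the factorial growth makes the ``spread-out over singleton supports'' strategy strictly worse than the ``take all of $A^+(B_n)_n$ plus the $n$ diagonal constants'' strategy, while for $2 \le n \le 5$ the two are close enough that the bound is not forced — matching the paper's stated open cases.
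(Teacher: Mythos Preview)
Your opening reduction --- ``it suffices to show $|U_1| + |U_{n^2+1}| \le n$'' --- is false as stated, and you notice this yourself: by Corollary~\ref{r.mi-ss} an independent set living entirely inside $A^+(B_n)_1$ can have $n^2(\lfloor n^2/4\rfloor + n)$ elements, far more than $n$. So the proof cannot proceed by bounding $|U_1| + |U_{n^2+1}|$ uniformly. Your fallback exchange argument heads in the right direction but mis-quantifies the exchange rate: you propose that each ``extra'' full- or singleton-support element costs \emph{at least one} $n$-support element (your constant $c \ge 1$), and with $c=1$ your displayed inequality collapses to a tautology that holds for all $n$ and gives no information about why $n \ge 6$ matters.

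The paper's argument is a clean dichotomy rather than a linear trade-off. Either $K$ contains no singleton-support map and no non-diagonal constant $\xi_{(p,q)}$ ($p\ne q$), in which case $K \subseteq A^+(B_n)_n \cup \{\xi_{(i,i)}:i\in[n]\}$ and $|K| \le (n!)n^2 + n$ trivially; or $K$ contains a single such ``bad'' element, and the paper shows that this one element already forces at least $(n-1)!(n-1)$ of the $n$-support maps to be excluded from $K$ (because, e.g., $(l,q;\sigma) = (l,p;\sigma) + \xi_{(p,q)}$ for every $l,\sigma$, and analogous identities for singleton maps). Since the total possible surplus from full- and singleton-support parts is only the polynomial quantity $\lfloor n^2/4\rfloor + n + n^2(\lfloor n^2/4\rfloor + n)$, the threshold $n\ge 6$ is exactly where the factorial exclusion $(n-1)!(n-1)$ overtakes that polynomial. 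The missing idea in your proposal is this factorial-versus-polynomial comparison driven by a \emph{single} bad element; your per-element exchange with a bounded constant $c$ cannot recover it.
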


\begin{proof}
For $n \ge 6$, if an independent subset $K$ of $A^+(B_n)$ contains a single support map or a full support map of the form $\xi_{(p, q)}$, for $p \ne q$, then $|K| < (n!)n^2 + n$. Hence, the result follows by Theorem \ref{lb-r4a+bn}.

Let $K$ be an independent subset of $A^+(B_n)$. By Corollary \ref{r.mi-ns}, Remark \ref{r.mi-fs} and Corollary \ref{r.mi-ss}, we have $|K| \le \kappa$, where \[\kappa = (n!)n^2 + \left\lfloor n^2 /4 \right\rfloor \ + n + n^2(\left\lfloor n^2 /4 \right\rfloor \ + n ).\] In the following, we observe that, out of $\kappa$ (the maximum possible number) elements, at least $(n-1)!(n-1)$ elements will not be in $K$. Hence, since $n \ge 6$, \[|K| \le \kappa - (n-1)!(n-1) <  (n!)n^2 + n.\]

\emph{Case 1: $\xi_{(p, q)} \in K$ with $p \ne q$}. For each $\sigma \in S_n$ and $l \in [n]$, since \[(l, q; \sigma) = (l, p; \sigma) + \xi_{(p, q)},\] the independent set $K$ cannot contain $(l, q; \sigma)$ and $(l, p; \sigma)$ together. Thus, out of $\kappa$ elements, at least $(n!)n$ elements will not be in $K$.

\emph{Case 2: $\sis{(r, s)}{(p, q)} \in K$}. For each $t \in [n]$ and $\sigma, \rho \in S_n$ such that $r \sigma = p$ and $r \rho = t$, since  \[\sis{(r, s)}{(p, q)} = (s, t; \sigma) + (s, q; \rho),\] the independent set $K$ cannot contain $(s, t; \sigma)$ and $(s, q; \rho)$ together.

\begin{description}
\item[{\it Subcase 2.1}] $p = q$. Except at $t = q$, for all other choices, none of the first terms is equal to any of the second terms in the sums $(s, t; \sigma) + (s, q; \rho)$. Thus, out of $\kappa$ elements, at least $(n-1)!(n-1)$ elements (either first terms or second terms in the sums) will not be in $K$.

\item[{\it Subcase 2.2}] $p \ne q$. In the similar lines of \emph{Subcase 2.1}, at least $(n-1)!(n-2)$ elements will not be in $K$ for the choices of $t \in [n] \setminus \{p, q\}$. If $t \in \{p, q\}$, the set of second terms of the sums for $t = p$ is equal to the set of first terms of the sums for $t = q$, which is of size $(n-1)!$. Thus, for $t \in \{p, q\}$, at least $(n-1)!$ elements will not be in $K$. Hence, a total of at least $(n-1)!(n-1)$ elements will not be in $K$.
\end{description}
\end{proof}

\section{Large Rank}

In this section, we obtain the large rank of $A^+(B_n)$.  An element $a$ of a semigroup $(\G, +)$ is said to be \emph{indecomposable} if there do not exist $b , c \in \G \setminus \{a\}$ such that $a = b + c$. The following  key result by Howie and Ribeiro is useful to find the large rank of a finite semigroup.

\begin{theorem}[\cite{a.hw00}]\label{r5-lsgp}
Let $\G$ be a finite semigroup and $V$ be a largest proper subsemigroup of $\G$; then $r_5(\G) = |V|+1.$ Hence, $r_5(\G) = |\G|$ if and only if $\G$ contains an indecomposable element.
\end{theorem}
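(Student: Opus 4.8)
The plan is to establish $r_5(\G) = |V|+1$ by proving the two inequalities separately, using only the definition of $r_5$ and the fact that $\langle U\rangle$ is the smallest subsemigroup of $\G$ containing $U$. For the lower bound I would exhibit a subset of size $|V|$ that fails to generate $\G$, namely $V$ itself: being a subsemigroup, $V$ is already closed under $+$, so $\langle V\rangle = V \subsetneq \G$. Hence for no $k \le |V|$ can every $k$-element subset of $\G$ generate $\G$, and therefore $r_5(\G) \ge |V|+1$.

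For the upper bound I would argue by contradiction: if some $U \subseteq \G$ with $|U| = |V|+1$ did not generate $\G$, then $\langle U\rangle$ would be a proper subsemigroup of $\G$ containing the $|V|+1$ distinct elements of $U$, so $|\langle U\rangle| \ge |V|+1$, contradicting the choice of $V$ as a \emph{largest} proper subsemigroup. Thus every $(|V|+1)$-element subset of $\G$ generates $\G$, i.e. $r_5(\G) \le |V|+1$, and combining the two bounds gives the stated equality.

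For the final clause: $r_5(\G) = |\G|$ holds iff $|V| = |\G|-1$, i.e. iff $\G$ possesses a proper subsemigroup of the largest conceivable order $|\G|-1$. Any such subsemigroup must be of the form $\G \setminus \{a\}$ for some $a \in \G$, and $\G \setminus \{a\}$ is closed under $+$ exactly when $a$ admits no expression $a = b+c$ with $b, c \in \G \setminus \{a\}$ --- that is, exactly when $a$ is indecomposable. Hence $r_5(\G) = |\G|$ iff $\G$ contains an indecomposable element.

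I do not expect a genuine obstacle here; the proof is short and formal. The only points requiring a little care are the standing assumption that a largest proper subsemigroup exists (true for every finite $\G$ with $|\G| \ge 2$, since singletons of idempotents are proper subsemigroups and there are only finitely many subsets) and the trivial bookkeeping that $|U|$ counts \emph{distinct} elements when one passes from $U$ to $\langle U\rangle$. One would also fix a convention for the degenerate case $|\G|=1$, but this has no bearing on the semigroups $A^+(B_n)$ studied here.
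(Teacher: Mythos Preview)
Your argument is correct; the paper itself does not prove this theorem but simply quotes it from Howie and Ribeiro \cite{a.hw00}, so there is no proof in the paper to compare against. Your two-inequality argument is the standard one and would be accepted without change.
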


\begin{remark}
Since $\xi_{(1, 2)}$ is an indecomposable element in $A^+(B_2)$, we have \[r_5(A^+(B_2)) = |A^+(B_2)| = 29.\]
\end{remark}

However, as shown in the following proposition, there is  no indecomposable element in $A^+(B_n)$, for $n \ge 3$.

\begin{proposition}
For $n \ge 3$, all the elements of $A^+(B_n)$ are decomposable.
\end{proposition}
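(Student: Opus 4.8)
The plan is to exhibit, for every element of $A^+(B_n)$ with $n \ge 3$, an explicit decomposition as a sum of two strictly smaller elements. By Theorem \ref{t.class.a+bn} there are exactly three types of elements to handle: the $n^2+1$ constant maps, the $n^4$ singleton support maps, and the $(n!)n^2$ maps of the form $(p,q;\sigma)$. I would treat the three types in turn.

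First, the constant maps. For $\xi_\vt$, writing $\xi_\vt = \xi_{(1,2)} + \xi_{(1,2)}$ (as in the proof of Lemma \ref{gen-cm-a+bn}, using $n \ge 3$ so that a suitable pair of indices with $j \ne k$ exists) gives a decomposition with both summands distinct from $\xi_\vt$. For $\xi_{(p,q)}$, choose any $r \in [n] \setminus \{p,q\}$ — possible precisely because $n \ge 3$ — and write $\xi_{(p,q)} = \xi_{(p,r)} + \xi_{(r,q)}$; since $r \ne p$ and $r \ne q$, both summands differ from $\xi_{(p,q)}$. (When $p=q$ this already uses $r \ne p$; when $p \ne q$ one can alternatively use $r=q$ or $r=p$, but insisting $r \notin \{p,q\}$ uniformly keeps the argument short.) Next, the $n$-support maps $(p,q;\sigma)$: by Remark \ref{r.ad.a+bn}(2) we have $(p,q;\sigma) = \phi_\sigma + \xi_{(p\sigma,q)}$, and $\phi_\sigma \in Aut(B_n)$ is an $n$-support map, hence not constant and not equal to $(p,q;\sigma)$ (which has support $\{(i,p) : i \in [n]\}$, a proper subset of $B_n$ differing from $B_n \setminus \{\vt\}$... indeed $\phi_\sigma$ has full support in the sense of $|B_n|$? no — $\phi_\sigma$ has support $B_n \setminus \{\vt\}$ of size $n^2$, so $n^2$-support, distinct from the $n$-support map), and $\xi_{(p\sigma,q)}$ is constant, hence also distinct from $(p,q;\sigma)$. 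So $n$-support maps are decomposable with no hypothesis on $n$ beyond $n \ge 2$.

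The remaining case, singleton support maps $\sis{(k,l)}{(p,q)}$, is where the hypothesis $n \ge 3$ does real work, and I expect this to be the main obstacle — the decomposition offered in Remark \ref{r.ad.a+bn}(3), namely $\sis{(k,l)}{(p,q)} = \xi_{(p,q)} + g$ with $g = (l,q;\rho)$ and $k\rho = q$, already expresses it as a sum of two elements each distinct from $\sis{(k,l)}{(p,q)}$ (one is constant, the other is $n$-support), so in fact this case needs no hypothesis either. The genuine content of the proposition is therefore simply a matter of checking that in each of the decompositions above the two summands are both different from the element being decomposed; the only place an obstruction could arise is the constant map $\xi_{(p,q)}$ and $\xi_\vt$, where a naive two-term splitting through a fixed intermediate index could accidentally reproduce the original — and this is exactly ruled out by having the freedom to pick $r \notin \{p,q\}$, i.e. by $n \ge 3$. (For $n = 2$ this freedom disappears and, consistently with the preceding remark, $\xi_{(1,2)}$ is indecomposable.) I would organize the write-up as a short case analysis following Theorem \ref{t.class.a+bn}, citing Remark \ref{r.ad.a+bn} for the $n$-support and singleton-support decompositions and giving the constant-map splitting explicitly, with the single sentence of justification that $n \ge 3$ guarantees an index outside $\{p,q\}$.
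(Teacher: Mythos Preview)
Your treatment of the constant maps and the singleton support maps is correct and essentially matches the paper's approach (the paper handles constant and singleton support maps uniformly via $(p,r)+(r,q)$ with $r\notin\{p,q\}$, but your version is equivalent). The problem is the $n$-support case. You propose $(p,q;\sigma) = \phi_\sigma + \xi_{(p\sigma,q)}$ from Remark~\ref{r.ad.a+bn}(2), and you even pause to note that $\phi_\sigma$ has support $B_n\setminus\{\vt\}$ of size $n^2$ --- but you do not draw the necessary conclusion: by Theorem~\ref{t.class.a+bn}, every element of $A^+(B_n)$ (for $n\ge 2$) has support of size $0$, $1$, $n$, or $n^2+1$, so $\phi_\sigma\notin A^+(B_n)$. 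Your decomposition is a sum in $M(B_n)$, not in $A^+(B_n)$, and therefore does not establish decomposability inside the semigroup.

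The fix is immediate and is what the paper does: write $(k,p;\sigma) = (k,q;\sigma) + \xi_{(q,p)}$ for any $q\ne p$. Both summands lie in $A^+(B_n)$, and since $q\ne p$ neither equals $(k,p;\sigma)$. This already works for $n\ge 2$, so your diagnosis that ``the hypothesis $n\ge 3$ does real work'' only in the constant-map case is correct; you just need to replace the $n$-support decomposition. (As a minor aside, your parenthetical that $n\ge 3$ is needed for $\xi_\vt = \xi_{(1,2)}+\xi_{(1,2)}$ is off: $n\ge 2$ suffices there too.)
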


\begin{proof}
Refereing to  Theorem \ref{t.class.a+bn}, we give a decomposition of each element $f \in A^+(B_n)$ in the following cases.
\begin{enumerate}
\item $f$ is the zero element: $\xi_\vt = \xi_{(p, q)} + \xi_{(r, s)}$, for $q \ne r$.
\item $f$ is a full or singleton support element: Let ${\rm Im}(f)\setminus \{\vt\} = \{(p, q)\}$. We have $f =  g + h$, where
$g, h \in A^+(B_n)$ such that ${\rm supp}(f) = {\rm supp}(g) = {\rm supp}(h)$ and ${\rm Im}(g)\setminus \{\vt\} = \{(p, r)\}$, ${\rm Im}(h)\setminus \{\vt\} = \{(r, q)\}$, for some $r \ne p, q$.
\item $f$ is an $n$-support map: Let $f  = (k, p; \sigma)$. Note that $f = (k, q; \sigma) + \xi_{(q, p)}$, for $q \ne p$.
\end{enumerate}

\end{proof}

In order to find the large rank of $A^+(B_n)$, we adopt the technique that is used to find the large rank of Brandt semigroups in \cite{a.jk13-3}. The technique, as stated in Lemma \ref{gm-lsgp}, relies on the concept of prime subsets of semigroups.  A nonempty subset $U$ of a semigroup $(\G, +)$ is said to be \emph{prime} if, $\forall a, b \in \G$, \[a + b \in U \Longrightarrow a \in U \vee b \in U.\]

\begin{lemma}[\cite{a.jk13-3}]\label{gm-lsgp}
Let $V$ be a smallest and proper prime subset of a finite semigroup $\G$; then $\G \setminus V$ is a largest proper subsemigroup of $\G$.
\end{lemma}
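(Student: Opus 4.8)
The plan is to show both inclusions in the equality $\G \setminus V$ is a largest proper subsemigroup, where $V$ is a smallest proper prime subset of $\G$. First I would verify that $\G \setminus V$ is indeed a subsemigroup: take $a, b \in \G \setminus V$ and suppose for contradiction that $a + b \in V$; since $V$ is prime, this forces $a \in V$ or $b \in V$, contradicting the choice of $a$ and $b$. It is proper because $V$ is nonempty. So $\G \setminus V$ is a proper subsemigroup.

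Next I would establish maximality. Suppose $W$ is any proper subsemigroup of $\G$; I want $|W| \le |\G \setminus V|$, equivalently $|\G \setminus W| \ge |V|$. The natural idea is to show that $\G \setminus W$ is a proper prime subset of $\G$, and then invoke the minimality of $V$ to conclude $|\G \setminus W| \ge |V|$, which gives $|W| \le |\G| - |V| = |\G \setminus V|$, as required. That $\G \setminus W$ is nonempty is immediate ($W$ is proper), and it is proper as a subset of $\G$ provided $W$ is nonempty, which any subsemigroup is. The content is primeness: given $a + b \in \G \setminus W$, I must show $a \in \G \setminus W$ or $b \in \G \setminus W$; equivalently, if $a \in W$ and $b \in W$ then $a + b \in W$ — but that is exactly the closure of $W$ under $+$. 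Hence $\G \setminus W$ is a proper prime subset, so $|\G \setminus W| \ge |V|$ by minimality of $V$, and maximality follows.

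The one subtlety worth checking is the degenerate case: the argument for maximality needs $\G \setminus W$ to be a \emph{proper} prime subset so that minimality of $V$ applies, which requires $W \ne \vn$; since any subsemigroup is nonempty by definition, this is fine. One should also make sure ``smallest proper prime subset'' is well-defined, i.e.\ that at least one proper prime subset exists — but $\G \setminus W$ for any proper subsemigroup $W$ (and such $W$ exists, e.g.\ a singleton idempotent, or more simply the complement of a suitable generator if $\G$ is not a single point) provides one, so the hypothesis that $V$ exists is not vacuous. No real obstacle is expected here; the proof is a short duality argument between subsemigroups and complements of prime subsets, and the main point is simply to observe that the two closure conditions — ``$W$ closed under $+$'' and ``$\G \setminus W$ prime'' — are contrapositives of one another.
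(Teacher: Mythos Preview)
Your argument is correct and is the standard duality: complements interchange subsemigroups and prime subsets, so a smallest prime subset corresponds to a largest subsemigroup. Note, however, that the paper does not actually prove this lemma---it merely cites it from \cite{a.jk13-3}---so there is no in-paper proof to compare against; your write-up would serve perfectly well as a self-contained justification.
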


Using Lemma \ref{gm-lsgp} and Theorem \ref{r5-lsgp}, now we obtain the large rank of $A^+(B_n)$ in the following theorem.

\begin{theorem}\label{lr-a+bn+}
 For $n \geq 2$, $r_5(A^+(B_n)) = (n!)n^2+n^2+n^4-n+3.$
\end{theorem}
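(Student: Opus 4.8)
The plan is to apply Lemma~\ref{gm-lsgp} together with Theorem~\ref{r5-lsgp}: it suffices to exhibit a smallest proper prime subset $V$ of $A^+(B_n)$, for then $A^+(B_n)\setminus V$ is a largest proper subsemigroup and $r_5(A^+(B_n)) = |A^+(B_n)\setminus V| + 1 = |A^+(B_n)| - |V| + 1$. Since $|A^+(B_n)| = (n!+1)n^2 + n^4 + 1$ by Theorem~\ref{t.class.a+bn}, matching the claimed value $(n!)n^2 + n^2 + n^4 - n + 3$ forces $|V| = n - 1$. So the real content is: identify an explicit prime subset of size $n-1$, and prove that no prime subset can be smaller.

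For the upper bound ($r_5 \ge$ the claimed value, i.e. a prime subset of size $\le n-1$ exists), I would look for a prime subset among the constant maps, guided by the decomposition structure. A natural candidate is $V = \{\xi_{(1,2)}, \xi_{(2,3)}, \dots, \xi_{(n-1,n)}\}$ — the $n-1$ ``consecutive'' constant maps used implicitly in Lemma~\ref{gen-cm-a+bn}, or some close variant such as $\{\xi_{(i,i+1)} : i \in [n-1]\}$. To verify primeness I must check that whenever $f + g = \xi_{(i,i+1)}$ for some $i$, either $f$ or $g$ already lies in $V$. By Remark~\ref{r.ksupport-sum} and Theorem~\ref{t.class.a+bn}, if a sum of two elements is a constant map $\xi_{(i,i+1)}$ then both summands are constant maps, say $f = \xi_{(a,b)}$, $g = \xi_{(c,d)}$ with $b = c$, $a = i$, $d = i+1$; so $f = \xi_{(i,b)}$, $g = \xi_{(b,i+1)}$ and one needs the index $b$ to be pinned down. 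This is where the choice of $V$ matters, and I expect the correct $V$ to be one where the arithmetic of the indices modulo $n$ (or a carefully chosen ``spanning tree'' of index transitions) forces one of the two summands back into $V$; this combinatorial bookkeeping is the first place care is required.

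For the lower bound ($|V| \ge n-1$ for every proper prime $V$), I would argue contrapositively using the decompositions in the preceding proposition and Theorem~\ref{t.class.a+bn}. Every $n$-support and full/singleton-support element decomposes as a sum of two elements neither of which need lie in any given small set, so any prime set must essentially be forced to contain enough constant maps; and the identity $\xi_\vt = \xi_{(p,q)} + \xi_{(r,s)}$ for $q \ne r$ shows $\xi_\vt \notin V$ for any proper prime $V$ with $|V|$ small (otherwise primeness would drag in far too much). The key reduction: translating the condition ``$V$ is prime and $A^+(B_n) \setminus V$ is closed under $+$'' into a statement purely about the constant maps $\mathcal{C}_{B_n} \cong B_n$, then citing (or reproving inline) the corresponding fact for $B_n$ from \cite{a.jk13-3}, namely that the smallest proper prime subset of $B_n$ has size $n-1$. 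The main obstacle I anticipate is not any single hard step but rather the need to confirm that the non-constant part of $A^+(B_n)$ imposes no extra constraints — i.e. that a prime subset can be taken to consist entirely of constant maps and that its minimum size is therefore exactly that of the minimum proper prime subset of $B_n$ — and to handle the separating role of $\xi_\vt$ cleanly so that the count comes out to precisely $n-1$ and hence $r_5(A^+(B_n)) = |A^+(B_n)| - (n-1) + 1 = (n!)n^2 + n^2 + n^4 - n + 3$.
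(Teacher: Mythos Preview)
Your overall strategy is exactly the paper's: exhibit a prime subset of size $n-1$ and show none can be smaller, then apply Lemma~\ref{gm-lsgp} and Theorem~\ref{r5-lsgp}. The arithmetic reduction $|A^+(B_n)| - (n-1) + 1$ is correct, and your observation that any sum equaling a nonzero full-support constant forces both summands to be full-support constants is right (Remark~\ref{r.ksupport-sum}).

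The genuine gap is in the choice of $V$. Your candidate $V = \{\xi_{(i,i+1)} : i \in [n-1]\}$ is \emph{not} prime for $n \ge 3$: take any $b \notin \{i, i+1\}$ and write
\[
\xi_{(i,\,i+1)} \;=\; \xi_{(i,\,b)} + \xi_{(b,\,i+1)},
\]
where neither summand lies in $V$. You correctly sensed that the free index $b$ is the problem, but your suggested fixes (modular arithmetic, a spanning tree of transitions) go in the wrong direction. The paper's choice is much simpler: fix one coordinate, taking $V = \{\xi_{(n,k)} : 1 \le k \le n-1\}$. Then $\xi_{(n,k)} = \xi_{(n,j)} + \xi_{(j,k)}$ forces the first summand to have first coordinate $n$; if $j \ne n$ then $\xi_{(n,j)} \in V$, and if $j = n$ then $\xi_{(j,k)} = \xi_{(n,k)} \in V$. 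Primeness is immediate.

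For the lower bound, the paper does not reduce to the Brandt-semigroup result in \cite{a.jk13-3} as you propose; instead it argues directly that any prime subset $U$ with $|U| < n-1$ leads to a contradiction, by case analysis on an element $f \in U$ (splitting according to whether $f$ is $\xi_\vt$, an $n$-support map, a singleton-support map, or a constant $\xi_{(p,q)}$ of various types) and in each case producing enough decompositions $f = g_i + h_i$ with pairwise distinct $g_i, h_i$ to force $|U| \ge n-1$. Your reduction idea is plausible for the upper bound (a prime subset of $\mathcal{C}_{B_n}\setminus\{\xi_\vt\}$ is automatically prime in $A^+(B_n)$ by the support argument), but for the lower bound you would still have to rule out small prime subsets that meet the non-constant part of $A^+(B_n)$, which is precisely what the paper's case analysis (iv) and (v) handles and what your sketch leaves unaddressed.
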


\begin{proof}
We show that the set $V = \{\xi_{(n, k)} \mid 1 \le k \le n - 1\}$ is a smallest prime subset of $A^+(B_n)$. Since, $|V| = n-1$, the result follows from Theorem \ref{t.class.a+bn}.

\emph{$V$ is a prime subset}: For $\xi_{(i, j)}, \xi_{(l, k)} \in A^+(B_n)$, if $\xi_{(i, j)} + \xi_{(l, k)} \in V$, then $i = n$, $j = l$ and $1 \le k \le n-1$. If $l = n$, then clearly $\xi_{(l, k)} \in V$; otherwise, $\xi_{(i, j)} \in V$.

\emph{$V$ is a smallest prime subset}: Let $U$ be a prime subset of $A^+(B_n)$ such that $|U| < |V|$. If $U \subset V$, then let $\xi_{(n, q)} \in V \setminus U$. Now, for $\xi_{(n, p)} \in U$ and for all $i \in [n]$, clearly we have \[\xi_{(n, p)} = \xi_{(n, i)} + \xi_{(i, p)}.\] Note that, for $i = q$, neither $\xi_{(n, i)}$ nor $\xi_{(i, p)}$ is in $U$; a contradiction to $U$ is a prime set.

Otherwise, we have $U \not \subset V$. Let $f \in U \setminus V$; then, $f$ can be (i) $\xi_\vt$, (ii) $\xi_{(n, n)}$, (iii) $\xi_{(p, q)}$, for some $p \in [n-1]$, $q \in [n]$, (iv) an $n$-support map, or (v) a singleton support map. In all the five cases we observe that $|U| \ge n-1$, which is a contradiction to the choice of $U$.

\begin{itemize}
\item[(i)] $f = \xi_\vt$: For each $i \in [n]$, since $\xi_{(i, 1)} + \xi_{(2, i)} = \xi_{\vt}$, there are at least $n$ elements in $U$.

\item[(ii)] $f = \xi_{(n, n)}$: For each $i \in [n-1]$, since $\xi_{(n, i)}+ \xi_{(i, n)} = \xi_{(n, n)}$, there are at least $n-1$ elements in $U$.

\item[(iii)] $f = \xi_{(p, q)}$, for some $p \in [n-1]$, $q \in [n]$: First note that, for each $i \in [n]$, we have $\xi_{(p, i)} + \xi_{(i, q)} = \xi_{(p, q)}$. If $p = q$, the argument is similar to above (ii). Otherwise, corresponding to $n-2$ different choices of $i \ne p, q$, there are at least $n - 2$ elements in $U$. Now, including $f$, we have $|U| \ge n-1$.

\item[(iv)] $f$ is an $n$-support map: Let $f = (k, p; \sigma)$. For $q \in [n] \setminus \{p\}$, since $(k, p; \sigma) = (k, q; \sigma) + \xi_{(q, p)}$, there are at least $n-1$ elements in $U$.

\item[(v)] $f$ is a singleton support map: Let $f$ = $\sis{(k, l)}{(p, q)}$. For $s \in [n] \setminus \{p, q\}$, since $$\sis{(k, l)}{(p, q)} =\; \sis{(k, l)}{(p, s)} +\; \sis{(k, l)}{(s, q)},$$there are at least $n-2$ elements in $U$. Since $f \in U$, we have $|U| \ge n-1$.
\end{itemize}
\end{proof}

\section{Conclusion}

In this work, we have investigated the ranks of $A^+(B_n)$, the additive semigroup reduct of the affine near-semiring over Brandt semigroup. Using the structural properties of $A^+(B_n)$ given in \cite{a.jk13},  we obtained the small, lower, intermediate and large ranks of $A^+(B_n)$, for all $n \ge 1$. The upper rank $r_4(A^+(B_n))$ was found for the semigroups with $n \ge 6$. For $2 \le n \le 5$,  through an explicit construction of an independent set, we reported a lower bound for $r_4(A^+(B_n))$. While 14 is the lower bound for the case $n = 2$, it is $(n!)n^2 + n$ for the other cases. We conjecture that these lower bounds are indeed the upper ranks of the respective cases. In the similar lines of this work, one could also investigate on the rank properties of the multiplicative semigroup reduct of the affine near-semiring over Brandt semigroup.

\end{document}